\newcounter{cnt}
\def\mydggeometry{\makeatletter\dg@YGRID=1\dg@XGRID=20\unitlength=0.003pt\makeatother}
\makeatother \theoremstyle{remark}
\numberwithin{equation}{section}
\theoremstyle{definition} 
\theoremstyle{definition}
\newtheorem{defn}{Definition}\theoremstyle{definition}
\newtheorem{theorem}{Theorem}
\newtheorem{lemma}{Lemma}
\newtheorem{remark}{Remark}
\newtheorem{example}{Example}
\newtheorem{conjecture}{Conjecture}
\newtheorem*{thm*}{Theorem}
\newtheorem*{thma*}{Theorem A}
\newtheorem*{thmb*}{Theorem B}
\newcommand{\ra}{\rightarrow}
\newcommand{\vp}{\varphi}
\newcommand{\s}{\sigma}
\newcommand{\ff}{\mathcal{F}}
\newcommand{\mc}{\mathbb{C}}
\newcommand{\bc}{\mathbb{C}}
\newcommand{\Fl}{\mathcal{F}l}
\newcommand{\lie}{\mathfrak}
\newcommand{\wt}{\widetilde}
\newcommand{\DC}{\operatorname*{DC}}
\newcommand{\SpDC}{\operatorname*{SpDC}}
\begin{document}

\author{Xin Fang, Ghislain Fourier}
\address{Xin Fang: Mathematisches Institut, Universit\"{a}t zu K\"{o}ln, Weyertal 86-90, D-50931, K\"{o}ln, Germany.}
\email{xinfang.math@gmail.com}
\address{Ghislain Fourier: Mathematisches Institut, Universit\"at Bonn}
\address{School of Mathematics and Statistics, University of Glasgow}
\email{ghislain.fourier@glasgow.ac.uk}

\date{}

\title[Torus fixed points and Genocchi numbers]{Torus fixed points in Schubert varieties and normalized median Genocchi numbers}

\begin{abstract}
We give a new proof for the fact that the number of torus fixed points for the degenerate flag variety is equal to the normalized median Genocchi number, using the identification with a certain Schubert variety. We further study the torus fixed points for the symplectic degenerate flag variety and develop a combinatorial model, symplectic Dellac configurations, so parametrize them. The number of these symplectic fixed points is conjectured to be the median Euler number.
\end{abstract}

\maketitle

\section*{Introduction}
We consider the Schubert variety $X_{\tau_n}$ associated to the Weyl group element
\[
{\tau_n} :=(s_ns_{n+1}\cdots s_{2n-2})\cdots (s_ks_{k+1}\cdots s_{2k-2})\cdots (s_3s_4)s_2\in  \mathfrak{S}_{2n} 
\]
in the partial flag variety $SL_{2n}/P$, where $P$ is the standard parabolic subalgebra associated to the simple roots $\{\alpha_1, \alpha_3, \ldots, \alpha_{2n-1} \}$. Then there is a natural action of a $2n-1$-dimensional torus $T_{2n-1}$ and we are mainly interested in the fixed points $X_{\tau_n}^{T_{2n-1}}$ of this torus action. It is well known that the fixed points are parametrized Weyl groups elements which are less or equal to $\tau_n$ in the Bruhat order (modulo the stabilizer of the parabolic, in this case, the subgroup generated by $s_1, s_3, \ldots, s_{2n-1}$). Our first result is
\begin{thma*} There is an explicit bijection $\mathbf{b}$ from Dellac configurations $\text{DC}_n$ (Definition~\ref{defn1}) of $2n$ columns and $n$ rows to $X_{\tau_n}^{T_{2n-1}}$, hence the number of torus fixed points is equal to the normalized median Genocchi number (see Section~\ref{Sec:1} for definition).
\end{thma*}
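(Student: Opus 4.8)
The plan is to reduce Theorem~A to a statement about the Bruhat order on $\mathfrak{S}_{2n}$ and then recognise the resulting inequalities as precisely the constraints defining a Dellac configuration. By the standard description of torus fixed points in Schubert varieties of partial flag varieties, $X_{\tau_n}^{T_{2n-1}}$ is in bijection with the set of minimal-length representatives $w$ of cosets in $\mathfrak{S}_{2n}/W_P$, with $W_P=\langle s_1,s_3,\dots,s_{2n-1}\rangle$, satisfying $w\le\tau_n$ in the Bruhat order. Since $W_P$ is generated by the transpositions $(2i-1\ \ 2i)$, such a $w$ is exactly a permutation with $w(2i-1)<w(2i)$ for all $i=1,\dots,n$. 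So it suffices to construct an explicit bijection between $\text{DC}_n$ and $\{\,w\in\mathfrak{S}_{2n}: w(2i-1)<w(2i)\ \forall i,\ w\le\tau_n\,\}$; combined with the equality in Section~\ref{Sec:1} between $|\text{DC}_n|$ and the normalized median Genocchi number, this gives the count.

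\emph{Computing $\tau_n$.} A short induction on $n$, using $\tau_n=(s_ns_{n+1}\cdots s_{2n-2})\,\tau_{n-1}$ and the fact that $s_ns_{n+1}\cdots s_{2n-2}$ is the $n$-cycle $(n\ \ n{+}1\ \ \cdots\ \ 2n{-}1)$, shows that $\tau_n$ is the permutation with $\tau_n(2i-1)=i$ and $\tau_n(2i)=n+i$ for $1\le i\le n$; in particular $\tau_n$ is itself a minimal coset representative, and the filling it produces below has its dots of row $i$ in the two extreme columns $i$ and $n+i$, i.e.\ it is the ``maximal'' Dellac configuration.

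\emph{The bijection.} To a filling that places a dot in row $i$ in the columns $a_i<b_i$ (for $i=1,\dots,n$) I associate the permutation $w$ with $w(2i-1)=a_i$ and $w(2i)=b_i$. This is a bijection between fillings of the $2n\times n$ grid with one dot per column and two dots per row, and minimal coset representatives of $\mathfrak{S}_{2n}/W_P$. The heart of the matter is the claim that, under this bijection, the staircase constraint of Definition~\ref{defn1} --- every dot of row $i$ lies in a column $j$ with $i\le j\le n+i$, equivalently $w(2i-1)\ge i$ and $w(2i)\le n+i$ for all $i$ --- is equivalent to $w\le\tau_n$. Granting this, $\mathbf b$ is the composite $\text{DC}_n\xrightarrow{\ \sim\ }\{w\le\tau_n\}\xrightarrow{\ \sim\ }X_{\tau_n}^{T_{2n-1}}$, and Theorem~A follows.

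\emph{Proving the claim, and where the difficulty lies.} I would invoke the description of the Bruhat order on $\mathfrak{S}_{2n}/W_P$: writing $S_k(w)=\{w(1),\dots,w(2k)\}$, for minimal representatives one has $w\le\tau_n$ iff $S_k(w)\le S_k(\tau_n)=\{1,\dots,k\}\cup\{n{+}1,\dots,n{+}k\}$ in the Gale (coordinatewise) order on $2k$-element subsets of $\{1,\dots,2n\}$, for every $k=1,\dots,n-1$. Splitting this Gale inequality into its first $k$ and last $k$ coordinates, it becomes the conjunction of $\{1,\dots,k\}\subseteq S_k(w)$ and $\max S_k(w)\le n+k$ --- the point being that the remaining ``upper'' coordinate inequalities are automatic once $\max S_k(w)\le n+k$, since the window $\{n{+}t{+}1,\dots,n{+}k\}$ contains only $k-t$ integers. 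One then checks, for a minimal coset representative $w$, that $w(2i-1)\ge i$ for all $i$ holds iff every value $m$ sits in one of the first $m$ blocks $\{2j-1,2j\}$ --- equivalently $\{1,\dots,k\}\subseteq S_k(w)$ for all $k$ --- while dually $w(2i)\le n+i$ for all $i$ holds iff every value $m>n$ sits in a block of index $\ge m-n$, equivalently $S_k(w)\subseteq\{1,\dots,n+k\}$ for all $k$. Matching these against the two halves of the Gale condition proves the claim. The main obstacle is precisely this translation: one must verify the two ``collapses'' of the Gale inequalities and treat the boundary block $i=n$ (where $w(2n)\le 2n$ and, via $\{1,\dots,n-1\}\subseteq S_{n-1}(w)$, also $w(2n-1)\ge n$ are forced) so that no condition is lost or imposed twice; everything else is bookkeeping.
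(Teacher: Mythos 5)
Your argument is correct, but it reaches Theorem A by a genuinely different route than the paper. Both proofs start from the same standard identification of $X_{\tau_n}^{T_{2n-1}}$ with $W_{\leq\tau_n}^{J}$, and your bijection (row $i$ with dots in columns $a_i<b_i$ goes to $w(2i-1)=a_i$, $w(2i)=b_i$) is exactly the paper's blow map $\mathbf{b}$ composed with $\vp$. The difference is in how the image is identified with the interval: the paper verifies nothing about prefixes, but instead invokes Sj\"ostrand's theorem (Theorem~\ref{Thm:bijection}, using that $\tau_n$ avoids $4231$, $35142$, $42513$, $351624$) to identify the full interval $W_{\leq\tau_n}$ with rook arrangements inside the right hull of $R_{\tau_n}$, and then cuts down to $W^{J}$ via the melt/blow maps and the description $W^J=\{\s:\ \s(2k-1)<\s(2k)\}$. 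You instead characterize the quotient interval directly: using the tableau/Gale-order criterion for $w,\tau_n\in W^{J}$ you show $w\le\tau_n$ is equivalent to $w(2i-1)\ge i$ and $w(2i)\le n+i$ for all $i$, which is precisely the Dellac staircase condition; your collapsing of the Gale inequalities (forcing $\{1,\dots,k\}\subseteq S_k(w)$, and the counting argument showing the upper inequalities reduce to $\max S_k(w)\le n+k$) and your treatment of the boundary block $i=n$ are correct, so no gap remains. What each approach buys: yours is self-contained and elementary (no pattern-avoidance check, no external rook-placement theorem), and it yields a clean explicit inequality description of $W_{\leq\tau_n}^{J}$; the paper's route is shorter given the citation, gives the stronger statement that restricted rook arrangements biject with the whole interval $W_{\leq\tau_n}$ (not only the coset representatives), and its rook-arrangement formalism is reused verbatim in the symplectic Section~\ref{Sec:2}, where the involution $\iota$ is easy to track through the melt and blow maps; to cover that case with your method you would additionally have to intersect your inequality description with the $\iota$-fixed elements, as the paper does via Corollary 8.1.9 of \cite{GTM05}.
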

\noindent Here is a an example of the Dellac configuration corresponding to a fixed point for $n=3$:
\[\qquad \qquad \qquad  \qquad   \qquad  \qquad  
\begin{tabularx}{\textwidth}{|p{0.2cm}|p{0.2cm}|p{0.2cm}|p{0.2cm}|p{0.2cm}|p{0.2cm}|cc}
\cline{1-6}
  $\bullet$  	 &  $\bullet$ &	&   && &&\\
\cline{1-6}
	  & & & $\bullet$  & $\bullet$ & & $\mapsto$ & $\s=124536$\\
\cline{1-6}
  & & $\bullet$  &  & & $\bullet$ && \\
\cline{1-6}
\end{tabularx}
\]

\bigskip

We also consider Schubert varieties of the symplectic flag variety, e.g. the Schubert variety $X_{\overline{\tau}_{2n}}^{sp}$ corresponding to the element (of the symplectic Weyl group):
\[
\overline{\tau}_{2n} := (r_{2n}\cdots r_{n+1})\cdots (r_{2n}r_{2n-1}r_{2n-2})(r_{2n}r_{2n-1})r_{2n}(r_n\cdots r_{2n-2})\cdots (r_4r_5r_6)(r_3r_4)r_2 
\]
in the symplectic partial flag variety. In this case, there is a natural action of $T_{2n}$ on the Schubert variety and we are again interested in the fixed points of this torus action. To parametrize them similar to the non-symplectic case, we introduce symplectic Dellac configurations (Definition~\ref{defn2}). These are Dellac configurations with $4n$ columns and $2n$ rows, which are invariant under the involution mapping the $i$-th row to the $2n-i+1$-st row. Our second result is
\begin{thmb*} The torus fixed points in $X_{\overline{\tau}_{2n}}^{sp}$ are parametrized by the symplectic Dellac configurations $\SpDC_{2n}$.
\end{thmb*}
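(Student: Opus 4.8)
The plan is to deduce Theorem B from Theorem A by a folding argument. Embed $Sp_{4n}\subset SL_{4n}$ in the standard way, so that the Weyl group $W$ of type $C_{2n}$ embeds into $\mathfrak S_{4n}$ as the fixed subgroup of the involution $\iota$ with $\iota(\sigma)(i)=4n+1-\sigma(4n+1-i)$, under which $r_j\mapsto s_js_{4n-j}$ for $1\le j\le 2n-1$ and $r_{2n}\mapsto s_{2n}$. First I would check, by bookkeeping of the displayed reduced expression, that under this embedding $\overline{\tau}_{2n}$ maps precisely to the element $\tau_{2n}\in\mathfrak S_{4n}$ of Theorem A with $n$ replaced by $2n$: each $r_{2n}$ contributes an $s_{2n}$, each $r_j$ contributes $s_js_{4n-j}$, the number of letters matches $\ell(\tau_{2n})$, and after the commutations among distant generators the word rearranges into the standard expression for $\tau_{2n}$; in particular $\tau_{2n}$ is $\iota$-fixed. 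I would also check that the odd reflections $r_1,r_3,\dots,r_{2n-1}$ generating the symplectic parabolic $W_{P^{sp}}$ map into, and generate the $\iota$-fixed part of, the parabolic $W_P=\langle s_1,s_3,\dots,s_{4n-1}\rangle$ of Theorem A, so that $W/W_{P^{sp}}$ sits inside $\mathfrak S_{4n}/W_P$ as the set of $\iota$-fixed cosets, and that the $2n$-dimensional torus of $Sp_{4n}$ is the $\iota$-fixed part of $T_{4n-1}$.

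Next I would use the standard description of the torus fixed points: $X_{\overline{\tau}_{2n}}^{sp,\,T_{2n}}$ is in bijection with $\{\,\bar v\in W/W_{P^{sp}} : \bar v\le\overline{\tau}_{2n}\,\}$ in the parabolic Bruhat order, and $X_{\tau_{2n}}^{T_{4n-1}}$ with $\{\,\bar v\in\mathfrak S_{4n}/W_P : \bar v\le\tau_{2n}\,\}$. The essential input is the compatibility of Bruhat orders with folding: for elements of $W$, and hence --- after passing to minimal length coset representatives for the matched parabolics --- for elements of $W/W_{P^{sp}}$, the type $C_{2n}$ Bruhat order is the restriction of the type $A_{4n-1}$ one. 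Since $\overline{\tau}_{2n}$ is $\iota$-fixed, it follows that $\{\bar v\le\overline{\tau}_{2n}\}$ is carried bijectively onto the set of $\iota$-fixed elements of $\{\bar v\le\tau_{2n}\}$, i.e. $X_{\overline{\tau}_{2n}}^{sp,\,T_{2n}}$ is identified with the set of $\iota$-fixed points of $X_{\tau_{2n}}^{T_{4n-1}}$.

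Finally I would transport $\iota$ through the bijection $\mathbf b\colon\DC_{2n}\xrightarrow{\ \sim\ }X_{\tau_{2n}}^{T_{4n-1}}$. Recording a fixed point by the ordered partition of the $4n$ columns into $2n$ pairs given by the two dots in each of the $2n$ rows of a Dellac configuration, one computes directly that $\iota$ corresponds to the $180^{\circ}$ rotation $(i,j)\mapsto(2n+1-i,\,4n+1-j)$ of the grid, which on rows is exactly the involution $i\mapsto 2n+1-i$ used to define $\SpDC_{2n}$ and which preserves the staircase condition of $\DC_{2n}$. Hence $\mathbf b$ conjugates this rotation to $\iota$ and restricts to the desired bijection $\SpDC_{2n}\xrightarrow{\ \sim\ }X_{\overline{\tau}_{2n}}^{sp,\,T_{2n}}$. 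The main obstacles are the two verifications on the nose --- that the displayed reduced word for $\overline{\tau}_{2n}$ unfolds to $\tau_{2n}$ (which pins down exactly which staircase convention for $\DC_{2n}$ is relevant), and that $\mathbf b$ intertwines the geometric involution $\iota$ with the purely combinatorial rotation; by contrast the Bruhat-order folding compatibility is classical (the type $C$ Bruhat order is the restriction of the type $A$ one under the one-line-notation embedding) and the bookkeeping for tori and parabolics is routine.
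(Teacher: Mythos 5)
Your proposal is correct and follows essentially the same route as the paper: fold the symplectic Weyl group into $\mathfrak{S}_{4n}$ via $r_i\mapsto s_is_{4n-i}$, use the classical compatibility of Bruhat orders under the diagram involution (the paper cites Corollary 8.1.9 of Bj\"orner--Brenti) to identify $W_{\leq\overline{\tau}_{2n}}^J$ with the $\iota$-fixed part of $\wt{W}_{\leq\tau_{2n}}^{\wt J}$, and then observe that under the type $A$ bijection $\mathbf{b}$ the involution $\iota$ becomes the $180^\circ$ rotation of the board, so the fixed configurations are exactly $\SpDC_{2n}$. The only cosmetic difference is that the paper makes the last step explicit through an intermediate object (symplectic rook arrangements, with the melt and blow maps restricted to the $\iota$-fixed locus), whereas you conjugate $\iota$ through $\mathbf{b}$ directly.
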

We conjecture that the number of symplectic Dellac configurations is equal to the normalized median Euler number (\cite{K97}).

\bigskip

We should explain here why we are interested in these particular Schubert varieties. E. Feigin (\cite{Fei11}) defined the degenerate flag variety
\[
\Fl^a_{n} := \{ (U_1, \ldots, U_{n-1}) \in \prod_{i=1}^{n-1} {\operatorname{Gr}}_i(\mathbb{C}^{n}) \mid \operatorname{pr}_{i+1} U_i \subset U_{i+1} \}
\]
where $\operatorname{pr}_{i}$ is the endomorphism of $\bc^{n}$ setting the $i$-th coordinate to be zero. This is in fact a flat degeneration of the classical flag variety $\Fl_n$, moreover it was shown in \cite{CFR12, CLL15} that there is an action of $T_{2n-1}$ on $\Fl^a_{n}$. The symplectic degenerate flag variety $(\Fl_{2n}^a)^{sp}$ has been defined in \cite{FFiL12} in a similar way.

\bigskip

The degenerate flag variety is one of the main objects in the  framework of PBW filtrations and degenerations on universal enveloping algebras of simple Lie algebras (see for various aspects \cite{FFoL11a, FFoL11b, FFoL13, FFR15, Hag14, Fou14, Fou15, CFR12}). Here, one obtains \textit{degenerate flag varieties} $\Fl^a(\lambda)$ as highest weight orbits of PBW degenerate modules. In \cite{Fei11, FFiL12} it has been shown that these highest weight orbits do have an interpretation as a variety of certain flags. 

\bigskip

Recently, it was shown in \cite{CL15} that these degenerate flag varieties are in fact our particular Schubert varieties:
\begin{thm*}(Cerulli Irelli-Lanini)
\begin{enumerate}
\item In the $\lie{sl}_n$-case, the degenerate flag variety $\Fl^a_{n}$ is isomorphic to the Schubert variety $X_{\tau_n}$, moreover the isomorphism $\zeta: \Fl^a_{n} \stackrel{\sim}{\longrightarrow}  X_{\tau_n} $ is $T_{2n-1}$-equivariant.
\item In the $\lie{sp}_{2n}$-case the degenerate symplectic flag variety is isomorphic to $X_{\overline{\tau}_{2n}}^{sp}$ and again the isomorphism $\zeta^{sp}: X_{\overline{\tau}_{2n}}^{sp} \stackrel{\sim}{\longrightarrow} (\Fl_{2n}^a)^{sp}$ is torus-equivariant.
\end{enumerate}
\end{thm*}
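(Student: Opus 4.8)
The plan is to construct the isomorphism $\zeta$ explicitly, to recognise its image as a $B$-orbit closure of the right dimension, and only afterwards to identify the Weyl group element $\tau_n$; torus-equivariance is then a bookkeeping check and the symplectic case is obtained by folding. First I would reformulate the degenerate incidence relations as honest flag inclusions in a doubled vector space. The relation $\operatorname{pr}_{i+1}U_i\subseteq U_{i+1}$ is equivalent to $U_i\subseteq U_{i+1}+\bc e_{i+1}$, so the only direction in which $U_i$ may fail to lie in $U_{i+1}$ is the coordinate line $\bc e_{i+1}$. I would exploit this by embedding $\bc^n\hookrightarrow\bc^{2n}$ and using the coordinate hyperplanes of the auxiliary copy to absorb the projections, thereby sending a degenerate flag $(U_1,\dots,U_{n-1})$ to a genuine flag $V_2\subseteq V_4\subseteq\dots\subseteq V_{2n-2}$ with $\dim V_{2i}=2i$ in $\bc^{2n}$. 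The even-dimensional steps are exactly the subspaces retained by the parabolic $P$ attached to $\{\alpha_1,\alpha_3,\dots,\alpha_{2n-1}\}$, so the assignment lands in $SL_{2n}/P$; equivalently one may read the embedding off the quiver-Grassmannian realisation of $\Fl^a_n$ in \cite{CFR12}. This yields a morphism $\zeta\colon\Fl^a_n\to SL_{2n}/P$, and I would verify directly from the Pl\"ucker description of the $V_{2i}$ that $\zeta$ is injective with injective differential.

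Next I would prove that $\operatorname{im}\zeta$ is a single Schubert variety. Because $\zeta$ is built from $B$-equivariant data its image is $B$-stable, and since $\Fl^a_n$ is irreducible and projective (\cite{Fei11}), $\operatorname{im}\zeta$ is an irreducible closed $B$-stable subvariety of $SL_{2n}/P$, hence $\overline{BwP/P}$ for a unique minimal coset representative $w$. To show $w=\tau_n$ I would compute the relative position of the image of a generic degenerate flag with respect to the standard coordinate flag, that is, read off the permutation recording the dimension jumps of the intersections of the $V_{2i}$ with the coordinate subspaces, and match it against the given product of simple reflections. Finally, to upgrade the bijective morphism $\zeta\colon\Fl^a_n\to X_{\tau_n}$ to an isomorphism I would use that both sides are normal (Schubert varieties are normal; $\Fl^a_n$ is normal by \cite{CFR12}), check that $\zeta$ is an isomorphism over the dense $B$-orbit, and invoke Zariski's Main Theorem.

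Torus-equivariance is then routine: the $T_{2n-1}$-action on $\Fl^a_n$ comes from rescaling the coordinates of $\bc^n$, the torus on $X_{\tau_n}$ is the diagonal subtorus of $SL_{2n}$, and the explicit formula for $\zeta$ visibly intertwines the two. For the second part I would run the same program for $\Fl^a_{2n}$ and then fold by the involution $\theta$ exchanging the $i$-th and $(2n{+}1{-}i)$-th rows, which encodes the degenerate symplectic form of \cite{FFiL12}. Concretely I would show that the type-$A$ map $\zeta\colon\Fl^a_{2n}\to SL_{4n}/P$ is $\theta$-equivariant, so that it restricts to an isomorphism between the $\theta$-fixed loci, namely $(\Fl^a_{2n})^{sp}$ and $X_{\overline{\tau}_{2n}}^{sp}$; that folding carries the type-$A$ element $\tau_{2n}\in\mathfrak{S}_{4n}$ to the stated product of reflections $r_j$ in the symplectic Weyl group; and that restricting the type-$A$ torus-equivariance to the $\theta$-fixed subtorus $T_{2n}$ gives $\zeta^{sp}$.

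The main obstacle I expect is twofold. First, proving that the explicit image is exactly one Schubert variety and computing $\tau_n$ (respectively $\overline{\tau}_{2n}$) on the nose: this is where the combinatorics of the reflection word has to be matched against the geometry of the generic flag, and it is the only genuinely delicate bookkeeping. Second, the symplectic folding, since passing to the degenerate symplectic form can interact badly with the doubling; one must check that the isotropy conditions are preserved exactly by $\zeta$ rather than merely up to the degeneration, and that the $\theta$-fixed locus of $X_{\tau_{2n}}$ is again a Schubert variety for the folded element and not a proper union of cells.
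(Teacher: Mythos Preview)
This theorem is not proved in the paper at all: it is quoted from \cite{CL15} (Cerulli Irelli--Lanini) as an external input, and the paper only \emph{recalls} the explicit description of the map $\zeta$ in Section~\ref{Sec:3} (via the surjections $\pi_i\colon U_{n+i}\to\bc^{n+1}$ and the induced maps $\zeta_i\colon\operatorname{Gr}_i(\bc^{n+1})\to\operatorname{Gr}_{2i-1}(\bc^{2n})$) in order to compute images of torus fixed points. There is therefore nothing in the present paper to compare your argument against beyond that description of $\zeta$.

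That said, your outline is a faithful sketch of the strategy actually carried out in \cite{CL15}: construct $\zeta$ explicitly by lifting each $U_i$ along a coordinate projection into a larger ambient space so that the degenerate incidence $\operatorname{pr}_{i+1}U_i\subset U_{i+1}$ becomes an honest inclusion, observe that the image is $B$-stable and irreducible hence a Schubert variety, pin down the Weyl group element by a relative-position computation, and conclude via normality. Your description of the map differs slightly in packaging from the one the paper records (they use the surjections $\pi_i$ rather than a single embedding $\bc^n\hookrightarrow\bc^{2n}$; the preimage $\pi_i^{-1}(U)$ already has dimension $2i-1$, which is why odd-step flags appear), but the content is the same. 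The symplectic folding via the involution $\iota$ is also how \cite{CL15} handles part~(2), and the paper here explicitly notes $\kappa(\overline\tau_{2n})=\tau_{2n}$, which is the combinatorial shadow of your claim that the type-$A$ element folds to the stated symplectic one.
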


The torus fixed points of the degenerate flag variety in type $A_n$ have been studied in \cite{Fei11}.  In that paper, an explicit bijection $\mathbf{f}$  to the set of Dellac configurations has been provided. Hence it was shown that the number of torus fixed points is equal to the normalized median Genocchi number.
\par
Combining the theorem by Cerulli Irelli and Lanini with Theorem A, we obtain another proof of this fact, using the classical set up of Schubert varieties only. Moreover, we can show that the following diagram commutes (here $\alpha$ denotes the natural identification of $W_{\leq\tau_{n}}^J $ with $X_{\tau_n}^{T_{2n-1}}$)
\[
\xymatrix{
(\ff l_{n}^a)^{T_{n}} \ar[r]^-{ \mathbf{f}} \ar[d]^-{\zeta} & {\DC}_{n} \ar[d]^-{\mathbf{b}}\\ X_{\tau_n}^{T_{2n-1}}& W_{\leq\tau_{n}}^J \ar[l]_-{\alpha}}.
\]
In the symplectic case, the map $\mathbf{f}$ is not present, mainly because the construction of symplectic Dellac configurations has not been seen in the literature before. Nevertheless we obtain a similar picture, namely the number of torus fixed points in the symplectic degenerate flag variety are parametrized by $\SpDC_{2n}$. We should mention here that E. Feigin (via the symplectic degenerate flag variety \cite{FFiL12}) as well as G. Cerulli Irelli (via quiver Grassmannian \cite{CFR12}) also conjectured the number of torus fixed points to be the normalized median Euler number. 

\bigskip

This paper is organized as follow, in Section~\ref{Sec:1} we prove our first theorem for the $\lie{sl}_n$, in Section~\ref{Sec:2} we consider the symplectic case. In Section~\ref{Sec:3} we relate our results to the framework of degenerate flag varieties.

\bigskip

\textbf{Acknowledgments}
The work of Xin Fang is supported by the Alexander von Humboldt Foundation. The work of Ghislain Fourier is funded by the DFG priority program 1388 ''Representation Theory''. The authors would like to thank Evgeny Feigin and Bruce Sagan for their helpful comments.

\section{Symmetric groups and Median Genocchi numbers}\label{Sec:1}

\subsection{}
Let $W=\mathfrak{S}_{2n}$ be the symmetric group generated by $S=\{s_1, s_2,\cdots, s_{2n-1}\}$ where $s_i=(i,i+1)$. Let $J=\{s_1,s_3,\cdots,s_{2n-1}\}\subset S$ and $W_J$ be the subgroup generated by $J$, $W^J$ be the set of minimal representatives of right cosets of $W_J$ in $W$. We define 
$${\tau_n}=(s_ns_{n+1}\cdots s_{2n-2})\cdots (s_ks_{k+1}\cdots s_{2k-2})\cdots (s_3s_4)s_2\in W,$$
then for $t=1,2,\cdots,2n$:
\begin{equation}\label{Eq:tau}
{\tau_n}(t)=\left\{\begin{matrix} k,& t=2k-1;\\
n+k, & t=2k.\end{matrix}\right.
\end{equation}
By construction, ${\tau_n}$ is a representative of minimal length in $W/W_J$, so ${\tau_n}\in W^J$. We define 
$$W_{\leq {\tau_n}}=\{w\in W|\ w\leq{\tau_n}\},\ \ W_{\leq {\tau_n}}^J=\{w\in W^J|\ w\leq{\tau_n}\},$$
where $\leq$ is the Bruhat order.
\par
\begin{defn}\label{defn1}
A Dellac configuration $C$ is a board of $2n$ columns and $n$ rows with $2n$ marked cells such that
\begin{enumerate}
\item each column contains exactly one marked cell;
\item each row contains exactly two marked cells;
\item if the $(i,j)$-cell is marked, then $i\leq j\leq n+i$.
\end{enumerate}
Let $\DC_n$ denote the set of such configurations.
\end{defn} 
It is worthy of pointing out that the definition of a Dellac configuration given above differs from that in \cite{Fei11} by rotating the board by $90^\circ$.
\par
The cardinality $h_n$ of the set $\DC_n$ is called a normalized median Genocchi number (see \cite{Fei11, Fei12} and the references therein). Consider the following polynomial defined by recursion:  $H_0(x)=1$,
$$H_n(x)=\frac{1}{2}(x+1)((x+1)H_{n-1}(x+1)-xH_{n-1}(x)).$$
Then it is proved in \cite{DR94} that $h_n=H_n(1)$.
\par
The following theorem is originally proved by Cerulli Irelli and Lanini in \cite{CL15} as a corollary of their main result and a result of Feigin \cite{Fei11} (see Remark~\ref{Rmk:proof} for details).
\begin{theorem}\label{Thm:Ancount} 
For any integer $n\geq 1$, $h_n=\# W_{\leq {\tau_n}}^J$.
\end{theorem}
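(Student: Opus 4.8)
The plan is to construct an explicit bijection $\mathbf{b}: \DC_n \to W_{\leq\tau_n}^J$ directly, which simultaneously gives the equality of cardinalities and the stronger Theorem A. The target set $W_{\leq\tau_n}^J$ should first be described combinatorially: an element $w\in W^J$ is a shuffle, determined by the two increasing sequences $w(1)<w(3)<\cdots<w(2n-1)$ and $w(2)<w(4)<\cdots<w(2n)$ partitioning $\{1,\dots,2n\}$. Thus $W^J$ is in bijection with pairs of complementary $n$-subsets, equivalently with ways to place $n$ objects into $n$ "odd slots" (rows in the first sequence) — which already looks like a board with $2n$ columns and $n$ rows where each row has two marked cells (one recording the position of $w(2i-1)$ and one the position of $w(2i)$). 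The substantive point is to translate the Bruhat condition $w\leq\tau_n$, using the formula \eqref{Eq:tau} for $\tau_n$, into exactly the staircase inequality $i\leq j\leq n+i$ of Definition~\ref{defn1}(3).

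First I would fix the combinatorial model for $W^J$: to $w\in W^J$ associate the board in which, reading row $i$ from $1$ to $n$, we mark the column $w(2i-1)$ and the column $w(2i)$. Conditions (1) and (2) of Definition~\ref{defn1} are then immediate (each value in $\{1,\dots,2n\}$ occurs once, so each column has one mark; each row records exactly the pair $(w(2i-1),w(2i))$). Next I would invoke the standard criterion for Bruhat order on cosets $W/W_J$ in type $A$ (the tableau/Ehresmann criterion): $w\leq\tau_n$ in $W^J$ if and only if, for every $k$, the $k$-th smallest element among $\{w(1),w(3),\dots,w(2k-1)\}$ is $\le$ the $k$-th smallest among $\{\tau_n(1),\tau_n(3),\dots,\tau_n(2k-1)\}$, and similarly for the even positions — or more precisely the comparison of the sorted initial segments. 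Plugging in $\tau_n(2k-1)=k$ and $\tau_n(2k)=n+k$ from \eqref{Eq:tau}, these inequalities should collapse to: the $i$-th marked cell in the odd-sequence lies in a column $\ge i$ and $\le n+i$, and symmetrically for the even sequence; reassembled on the board this is precisely "$(i,j)$ marked $\Rightarrow i\le j\le n+i$." I would then check that the map is invertible: given a Dellac configuration, row $i$ has two marked columns $j_1<j_2$; declare the smaller to belong to the odd sequence and the larger to the even sequence (monotonicity in $i$ needs to be verified — this is where the staircase bound does real work), producing a well-defined $w\in W^J$, and verify $w\le\tau_n$ by running the Ehresmann inequalities backwards.

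The main obstacle I anticipate is the precise bookkeeping in the step that matches the Bruhat-order inequalities with the staircase condition $i\le j\le n+i$ — in particular, verifying that in a Dellac configuration the smaller marked column in each row can consistently be assigned to the odd subsequence so that both resulting subsequences are genuinely increasing in $i$, and conversely that an arbitrary $w\le\tau_n$ never forces $w(2i)<w(2i-1)$ in a way that breaks the row-structure. This amounts to showing the two conditions "each row has its two marks, with the left one feeding a globally increasing odd-sequence" and "$i\le j\le n+i$ for every marked $(i,j)$" are equivalent to the full list of Ehresmann inequalities against $\tau_n$; I expect this to follow by an induction on $n$ or on $k$, peeling off the last column, using the very rigid shape of $\tau_n$ (whose one-line notation is $1,\,n{+}1,\,2,\,n{+}2,\dots$). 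Once the bijection $\mathbf{b}$ is established, $h_n=\#\DC_n=\#W_{\leq\tau_n}^J$ is immediate, and the identification with the normalized median Genocchi number is the definition recalled above (with $h_n=H_n(1)$ by \cite{DR94}).
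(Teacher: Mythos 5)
Your overall plan (a direct bijection between $\DC_n$ and $W^J_{\leq\tau_n}$, with the staircase condition encoding the Bruhat inequality) matches the spirit of the paper, but your combinatorial model of $W^J$ is wrong, and the error propagates into precisely the step you flag as needing verification. For $J=\{s_1,s_3,\dots,s_{2n-1}\}$ the minimal coset representatives are exactly the $w$ with $w(2i-1)<w(2i)$ for every $i$; they are \emph{not} shuffles: the sequences $w(1),w(3),\dots,w(2n-1)$ and $w(2),w(4),\dots,w(2n)$ need not be increasing (there are $(2n)!/2^n$ elements of $W^J$, not $\binom{2n}{n}$). Concretely, $\sigma=124536\in W^J_{\leq\tau_3}$ (the paper's running example) has odd-position values $1,4,3$. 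Hence the ``monotonicity in $i$'' you hope the staircase bound will force when inverting the map is simply false, and your proposed inverse (smaller mark of each row feeding a \emph{globally increasing} odd subsequence) does not describe the correct correspondence; the correct inverse is just to place the smaller mark of row $i$ at position $2i-1$ and the larger at position $2i$, with no global monotonicity at all.

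The second gap is that the crux --- the equivalence of $w\leq\tau_n$ with the condition $i\leq j\leq n+i$ for every mark --- is only sketched, and the sketch leans on the false monotonicity claim; you also state the tableau criterion incorrectly (it compares the sorted initial segments $\{w(1),\dots,w(d)\}$ with those of $\tau_n$ at the relevant dimensions, not the odd-position values alone). This equivalence is exactly where the paper invests its one nontrivial input: $\tau_n$ avoids the patterns $4231$, $35142$, $42513$, $351624$, so Sj\"ostrand's right-hull theorem (Theorem~\ref{Thm:bijection}) identifies $W_{\leq\tau_n}$ with rook placements inside the right hull of $R_{\tau_n}$, whose shape is read off from (\ref{Eq:tau}); the melt and blow maps then merge or split the rows $2i-1,2i$ to pass to $W^J_{\leq\tau_n}$. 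Your alternative route via the Ehresmann criterion could in fact be completed (the lower bounds $w(2i-1)\geq i$ follow from comparing final segments, and counting which columns can carry marks in rows $1,\dots,k$ yields the initial-segment inequalities, and conversely), but as written the proposal contains a false structural claim and leaves the decisive equivalence unproved, so it does not yet constitute a proof.
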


We provide in this section a purely combinatorial bijective proof of the theorem.

\subsection{Rook arrangements}\label{Sec:Rook}
Consider a board of $n$ rows and columns. A rook arrangement $R$ is a filling of the cells by $n$ marks such that each row and each column have exactly one mark. Let $\mathcal{R}_n$ denote the set of all rook arrangements. There is a bijection 
\begin{align}\label{phi}
\vp:\mathcal{R}_n\stackrel{\sim}{\longrightarrow}\mathfrak{S}_n
\end{align}
sending a rook arrangement $R$ to the permutation $\s_{R}$ satisfying: for $i=1,\cdots,n$, $\s_R(i)=j$ if and only if the cell $(i,j)$ is marked in $R$. For $\s\in\mathfrak{S}_n$, we denote $R_\s:=\vp^{-1}(\s)$.
\par
Let $R$ be a rook arrangement. The convex hull of the marked cells in $R$ is the smallest right-aligned skew-Ferrers board containing all marks in $R$.
\par
From now on we consider $\mathfrak{S}_{2n}$: $R_{\tau_n}$ is a board of $2n$ columns and rows. A restricted rook arrangement with respect to ${\tau_n}$ is a rook arrangement such that all marked cells in the board are contained in the convex hull (it is called the right hull in \cite{Sjo07}) of the marked cells in $R_{\tau_n}$. Let $R_{\leq {\tau_n}}$ denote the set of all restricted rook arrangements with respect to ${\tau_n}$. 

\begin{example}\label{Ex:1}
We consider an example where $n=3$, then $\tau_3 = 142536$ and the shadowed area is the called the convex hull of the marked cells in $R_{\tau_3}$.  We fix $\s=124536$, then the rook arrangement of $\s$ is (given by the dots):\\
\[ \qquad \qquad \qquad  \qquad \qquad \qquad 
R_\s=\begin{tabularx}{\textwidth}{|p{0.2cm}|p{0.2cm}|p{0.2cm}|p{0.2cm}|p{0.2cm}|p{0.2cm}|}
\cline{1-6}
  $\bullet$  \cellcolor[gray]{.8}	 & &	&   && \\
\cline{1-6}
	  & $\bullet$ \cellcolor[gray]{.8} &  \cellcolor[gray]{.8} &  \cellcolor[gray]{.8} & &  \\
\cline{1-6}
   & \cellcolor[gray]{.8} &   \cellcolor[gray]{.8}& $\bullet$\cellcolor[gray]{.8} & & \\
\cline{1-6}
   & & \cellcolor[gray]{.8} & \cellcolor[gray]{.8} & $\bullet$  \cellcolor[gray]{.8}   &\\ 
\cline{1-6}
  &  &  $\bullet$\cellcolor[gray]{.8} & \cellcolor[gray]{.8} & \cellcolor[gray]{.8}  &\\
\cline{1-6}
   & & & & &$\bullet$  \cellcolor[gray]{.8}   \\
\cline{1-6}
\end{tabularx}
\]
$ $\\
$R_\s$ is the restricted rook arrangement with respect to $\tau_3$.
\end{example}

It is clear that ${\tau_n}$ avoids the patterns $4231$, $35142$, $42513$, and $351624$. The following result is a special case of Theorem 4 in \cite{Sjo07}.
\begin{theorem}[\cite{Sjo07}]\label{Thm:bijection}
The restriction of $\vp$ on $R_{\leq {\tau_n}}$ gives a bijection $R_{\leq {\tau_n}} \stackrel{\sim}{\longrightarrow} W_{\leq{\tau_n}}$.
\end{theorem}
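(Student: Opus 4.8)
The plan is to read the statement off from Sj\"ostrand's structure theorem for lower Bruhat intervals \cite{Sjo07}, using only that ${\tau_n}$ is pattern-avoiding. Since $\vp\colon\mathcal R_{2n}\xrightarrow{\ \sim\ }\mathfrak S_{2n}$ is already a bijection, its restriction to the subset $R_{\leq{\tau_n}}\subseteq\mathcal R_{2n}$ is automatically a bijection onto its image, so the whole content of the theorem is the set equality $\vp(R_{\leq{\tau_n}})=W_{\leq{\tau_n}}$; unwound, this says that for every $\s\in\mathfrak S_{2n}$ one has $\s\leq{\tau_n}$ in the Bruhat order if and only if every marked cell of $R_\s$ lies in the convex hull of the marked cells of $R_{\tau_n}$. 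This is precisely \cite[Theorem~4]{Sjo07} applied to $w={\tau_n}$, once two points are in place: (i) ${\tau_n}$ avoids the patterns $4231$, $35142$, $42513$ and $351624$; and (ii) the ``convex hull'' of Section~\ref{Sec:Rook} coincides with the ``right hull'' of \cite{Sjo07}. Point (ii) is only a comparison of definitions (and is already noted in the text), so the substance of the specialisation is point (i).

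For (i) I would argue directly from the one-line form of ${\tau_n}$, which by \eqref{Eq:tau} is $(1,\,n{+}1,\,2,\,n{+}2,\,\dots,\,k,\,n{+}k,\,\dots,\,n,\,2n)$: the odd positions carry $1,\dots,n$ in increasing order, the even positions carry $n{+}1,\dots,2n$ in increasing order, and every value in an even position exceeds every value in an odd position. Consequently, if a pattern $\pi\in\mathfrak S_k$ occurs in ${\tau_n}$, then grouping the positions of the occurrence according to parity exhibits a partition $\{1,\dots,k\}=L\sqcup H$ in which $L$ is the set of positions carrying the $|L|$ smallest values of $\pi$ and both $\pi|_L$ and $\pi|_H$ are increasing subwords of $\pi$. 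One now checks by hand that no such partition exists for any of the four forbidden patterns: in each case the demand that $\pi|_L$ be increasing forces $|L|\le 2$, and for every such $|L|$ the complementary subword $\pi|_H$ has a descent. Hence ${\tau_n}$ avoids all four patterns, \cite[Theorem~4]{Sjo07} applies, and it gives $W_{\leq{\tau_n}}=\{\s\in\mathfrak S_{2n}:R_\s\subseteq\text{(convex hull of }R_{\tau_n})\}=\vp(R_{\leq{\tau_n}})$, which is the assertion.

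I do not expect any obstacle in this reduction; the genuinely non-trivial input is Sj\"ostrand's theorem itself. If one instead wanted a self-contained proof, the route would be to compute the hull of $R_{\tau_n}$ explicitly from \eqref{Eq:tau} --- one finds it to consist of the single cells $(1,1)$ and $(2n,2n)$ together with, for $k=1,\dots,n-1$, the $2\times n$ rectangle occupying rows $2k,2k{+}1$ and columns $k{+}1,\dots,n{+}k$ --- and then to match the cell-containment condition against the rank criterion $\s\leq{\tau_n}\iff\#\{\ell\le i:\s(\ell)\le j\}\ge\#\{\ell\le i:{\tau_n}(\ell)\le j\}$ for all $i,j$. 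The implication ``$\Rightarrow$'' is immediate, since a mark of $R_\s$ lying outside the hull directly violates one of these inequalities; the implication ``$\Leftarrow$'', which promotes the few cell-containment conditions to all of the rank inequalities, is the step that genuinely exploits the very special, almost-increasing shape of ${\tau_n}$. That last step is the main obstacle, and it is exactly what Sj\"ostrand's theorem packages for us.
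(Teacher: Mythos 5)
Your proposal is correct and follows exactly the paper's route: the paper also just observes that $\tau_n$ avoids $4231$, $35142$, $42513$, $351624$ (stated there as ``clear'') and invokes Theorem~4 of \cite{Sjo07}, with the convex hull being Sj\"ostrand's right hull. The only added value in your write-up is that you actually verify the pattern avoidance via the shuffle-of-two-increasing-sequences structure of $\tau_n$, which is a correct and welcome elaboration of the step the paper leaves unproved.
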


\subsection{From rook arrangements to Dellac configurations}
We define two maps $\mathbf{m}:R_{\leq {\tau_n}}\ra \DC_n$ called the melt map and $\mathbf{b}:\DC_n\ra R_{\leq{\tau_n}}$ called the blow map.

\par
Let $R\in R_{\leq {\tau_n}}$ be a restricted rook arrangement. Consider a board $C_R$ of $2n$ columns and $n$ rows defined by: the cell $(k,l)$ of $C_R$ is marked if and only if either the cell $(2k-1,l)$ or the cell $(2k,l)$ is marked in $R$. Intuitively, the $k$-th row of $C_R$ is obtained by merging the $(2k-1)$-th and the $2k$-th rows in $R$.

\begin{lemma}
The board $C_R$ is a Dellac configuration.
\end{lemma}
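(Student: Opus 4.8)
The plan is to verify, one by one, the three defining conditions of Definition~\ref{defn1} for the board $C_R$, whose $k$-th row is the superposition of the $(2k-1)$-st and $(2k)$-th rows of $R$. Conditions (1) and (2) are formal. For (1), the columns of $C_R$ are literally the columns of $R$, each of which carries exactly one mark of $R$, and merging rows does not change this. For (2), writing $\s := \s_R = \vp(R) \in \mathfrak{S}_{2n}$, the marks in rows $2k-1$ and $2k$ of $R$ sit in the distinct columns $\s(2k-1)$ and $\s(2k)$ (distinct since $\s$ is a permutation), so the $k$-th row of $C_R$ has exactly two marks. Condition (3) is the substance: a marked cell $(i,j)$ of $C_R$ comes from $\s(2i-1)=j$ or $\s(2i)=j$, and since $\lceil(2i-1)/2\rceil=\lceil 2i/2\rceil=i$, condition (3) follows once I show that every restricted rook arrangement places all of its marks in the region
\[
G := \{\, (t,c) : 1 \le t \le 2n,\ \lceil t/2 \rceil \le c \le n + \lceil t/2 \rceil \,\},
\]
because then $\s(t)=j$ with $t\in\{2i-1,2i\}$ forces $i\le j\le n+i$; note $G$ is exactly the "unfolding" of the triangular region of Definition~\ref{defn1}(3) along the row-merging map.

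To prove that a restricted $R$ has all marks in $G$: by definition its marks lie in the convex hull of the marked cells of $R_{\tau_n}$, which by \eqref{Eq:tau} are the cells $(2k-1,k)$ and $(2k,n+k)$ for $k=1,\dots,n$. Each of these lies in $G$ (immediate from $\lceil(2k-1)/2\rceil=\lceil 2k/2\rceil=k$), and $G$ is itself a right-aligned skew-Ferrers board, its rows being the intervals $[\lceil t/2\rceil,\, n+\lceil t/2\rceil]$ whose two endpoints increase weakly with $t$. Since the convex hull is by definition the \emph{smallest} right-aligned skew-Ferrers board containing the marks of $R_{\tau_n}$, it is contained in $G$, and we are done. (Alternatively, one may bypass the notion of hull entirely: by Theorem~\ref{Thm:bijection}, $\s\le\tau_n$ in the Bruhat order, so the standard monotonicity estimates $u\le v\Rightarrow \min_{s\ge t}v(s)\le u(t)\le\max_{s\le t}v(s)$ apply, and a short computation from \eqref{Eq:tau} shows the right-hand side is $\le n+\lceil t/2\rceil$ and the left-hand side is $\ge\lceil t/2\rceil$, again placing all marks of $R$ in $G$.)

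I expect the only genuinely delicate point to be this last step — correctly identifying an upper bound for the convex hull of $R_{\tau_n}$, or equivalently the Bruhat-order estimate — together with the bookkeeping that carries the resulting column bounds through the row-merging map, where one must keep track of even versus odd rows and the ceilings $\lceil t/2\rceil$. Everything else is routine.
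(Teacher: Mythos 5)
Your proof is correct and follows essentially the same route as the paper: conditions (1) and (2) come formally from the rook-arrangement structure, and condition (3) from the restriction with respect to $\tau_n$ together with \eqref{Eq:tau}. The only difference is that you spell out explicitly (via the region $G$ and minimality of the hull, or the Bruhat-order bound) what the paper leaves as a one-line assertion, which is a faithful elaboration rather than a different argument.
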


\begin{proof}
By the definition of a rook arrangement, each row of $C_R$ has exactly two marked cells; each column of $C_R$ has exactly one marked cell. When moreover $R$ is restricted with respect to ${\tau_n}$, by (\ref{Eq:tau}), $C_R$ has the following property: if the cell $(r,s)$ in $C_R$ is marked, then $r\leq s\leq n+r$.
\end{proof}

By using the lemma we obtain a well-defined melt map
\[
\mathbf{m}(R):=C_R.
\]

\par

Let $C\in \DC_n$ be a Dellac configuration. A board $R_C$ of $2n$ rows and columns is associated to $C$ in the following way: the cells $(i,j)$ and $(i,k)$ with $j<k$ are marked in $C$ if and only if the cells $(2i-1,j)$ and $(2i,k)$ are marked in $R_C$. Intuitively, the $i$-th row in $C$ is splitted into two rows where the first row bears the first marked point and the second row admits the second one.

\begin{example}
Let $\s=124536$ be the permutation in Example~\ref{Ex:1}. The corresponding Dellac configuration via the melt procedure is given by:
\[\qquad \qquad \qquad  \qquad \qquad \qquad 
\begin{tabularx}{\textwidth}{|p{0.2cm}|p{0.2cm}|p{0.2cm}|p{0.2cm}|p{0.2cm}|p{0.2cm}|}
\cline{1-6}
  $\bullet$  	 &  $\bullet$ &	&   && \\
\cline{1-6}
	  & & & $\bullet$  & $\bullet$ &  \\
\cline{1-6}
   & & $\bullet$  &  & & $\bullet$\\
\cline{1-6}
\end{tabularx}
\]

\bigskip

\end{example}

\begin{lemma}\label{Lem:res}
The board $R_C$ is a restricted rook arrangement with respect to ${\tau_n}$.
\end{lemma}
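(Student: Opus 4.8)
The plan is to show two things: first, that $R_C$ is a genuine rook arrangement (one mark per row and per column), and second, that every mark of $R_C$ lies inside the convex hull of the marks of $R_{\tau_n}$. For the first part I would unwind the construction: each row of $R_C$ is one of the two ``halves'' of a row of $C$, and since each row of $C$ carries exactly two marked cells — in distinct columns $j<k$ by Definition~\ref{defn1}(2) — the row $2i-1$ of $R_C$ gets exactly the mark in column $j$ and the row $2i$ gets exactly the mark in column $k$. Thus every row of $R_C$ has exactly one mark. For the columns, observe that column $l$ of $C$ has exactly one marked cell, say in row $i$; this mark is either the ``first'' or the ``second'' mark of row $i$, hence it is sent to exactly one of the rows $2i-1$, $2i$ of $R_C$, and to no other row. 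So column $l$ of $R_C$ also has exactly one mark, and $R_C \in \mathcal{R}_{2n}$.

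For the second part I would translate the Dellac inequality $i \le j \le n+i$ from Definition~\ref{defn1}(3) into the statement that the marks of $R_C$ lie in the right hull of the marks of $R_{\tau_n}$. Using \eqref{Eq:tau}, the mark of $R_{\tau_n}$ in row $t=2k-1$ sits in column $k$ and the mark in row $t=2k$ sits in column $n+k$; so the convex hull of these marks, read row by row, is precisely the set of cells $(t,s)$ with $\lceil t/2 \rceil \le s \le n + \lceil t/2 \rceil$ (this is exactly the shape computed by hand in Example~\ref{Ex:1}, and it is essentially the content of the computation in the previous lemma, run backwards). Now if the cell $(2i-1,j)$ is marked in $R_C$, it came from the first mark $(i,j)$ of row $i$ in $C$, and Definition~\ref{defn1}(3) gives $i \le j \le n+i$, which is exactly the hull condition for row $2i-1$ since $\lceil (2i-1)/2 \rceil = i$. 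Likewise if $(2i,k)$ is marked in $R_C$ it came from $(i,k)$ with $i \le k \le n+i$, matching the hull condition for row $2i$ since $\lceil 2i/2 \rceil = i$. Hence all marks of $R_C$ lie in the convex hull of the marks of $R_{\tau_n}$, so $R_C \in R_{\leq\tau_n}$.

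I do not expect a serious obstacle here; the only mild subtlety is bookkeeping which of the two marks in a row of $C$ is labelled ``first'' — one must use the convention $j<k$ consistently so that the smaller column goes to the odd-indexed row and the larger to the even-indexed row, and then the two inequalities $i\le j$ and $k\le n+i$ of the Dellac condition land on exactly the two rows $2i-1$ and $2i$ they are needed for. It is worth remarking, for use later, that $\mathbf{m}$ and $\mathbf{b}$ are mutually inverse: splitting a row and then re-merging it recovers the original marked cells, and merging then splitting does too, provided one checks that in $C_R$ the two marks of a merged row sit in the correct order, which again follows from \eqref{Eq:tau}. This reciprocity, together with Theorem~\ref{Thm:bijection}, is what will ultimately yield Theorem~\ref{Thm:Ancount}.
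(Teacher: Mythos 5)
There is a genuine gap in the second half of your argument: your explicit description of the convex hull of the marks of $R_{\tau_n}$ is wrong. The hull is \emph{not} the set of cells $(t,s)$ with $\lceil t/2\rceil\le s\le n+\lceil t/2\rceil$; being the smallest right-aligned skew-Ferrers board through the dots $(2k-1,k)$ and $(2k,n+k)$ from \eqref{Eq:tau}, its rows are strictly smaller: row $2i-1$ is the interval $[i,\,n+i-1]$ (and just the single cell $(1,1)$ when $i=1$), while row $2i$ is $[i+1,\,n+i]$ (and just the cell $(2n,2n)$ when $i=n$). This is exactly what the shading in Example~\ref{Ex:1} shows: its first row consists of the single cell $(1,1)$, not of four cells. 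So verifying the row-wise Dellac inequality $i\le j\le n+i$ only places the marks of $R_C$ in a strictly larger board than the right hull, which is not what ``restricted with respect to $\tau_n$'' means -- and the distinction matters, since Theorem~\ref{Thm:bijection} (Sj\"ostrand) and the resulting count are false for the larger board (already for $n=1$ the larger board admits two rook placements while $W_{\le\tau_1}$ has one element).

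The lemma is still true, and your setup almost gives it, but you must use more than condition (3) applied row by row. Since the mark in row $2i-1$ of $R_C$ is the \emph{smaller} mark $j<k$ of row $i$ of $C$ and the mark in row $2i$ is the larger one, conditions $i\le j<k\le n+i$ yield the sharpened bounds $j\le n+i-1$ and $k\ge i+1$, which is what the true hull requires in the generic rows. For the two extreme rows you additionally need condition (1) of Definition~\ref{defn1}: column $1$ of $C$ carries exactly one mark, and condition (3) forces it into row $1$, so the smaller mark of row $1$ sits in column $1$ (hence the mark in row $1$ of $R_C$ is at $(1,1)$); symmetrically, the unique mark in column $2n$ of $C$ must lie in row $n$, so the larger mark of row $n$ is $2n$ and the mark in row $2n$ of $R_C$ is at $(2n,2n)$. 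With these additions the containment in the actual hull follows; note the paper's own proof simply asserts that condition (3) gives restrictedness, but it does not make the incorrect identification of the hull on which your verification rests.
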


\begin{proof}
Conditions (1) and (2) in the definition of the Dellac configuration guarantees that $R_C$ is a rook arrangement. The condition (3) means that $R_C$ is restricted with respect to ${\tau_n}$.
\end{proof}

By defining $\mathbf{b}(C)=R_C$, the blow map is well-defined by Lemma~\ref{Lem:res}.

\begin{lemma}\label{Lem:bij}
The following statements hold:
\begin{enumerate}
\item the map $\mathbf{b}$ is injective with $\text{im}(\mathbf{b})=\vp^{-1}(W_{\leq{\tau_n}}^J)$;
\item we have $\mathbf{m}\circ \mathbf{b}=\text{id}$.
\end{enumerate}
\end{lemma}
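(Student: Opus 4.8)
The plan is to read off both statements from the explicit combinatorics of $\mathbf{b}$ and $\mathbf{m}$, together with Theorem~\ref{Thm:bijection} and the standard description of the parabolic quotient. Recall that, since $W_J$ is generated by the commuting transpositions $(1,2),(3,4),\dots,(2n-1,2n)$, a permutation $w\in\mathfrak{S}_{2n}$ lies in $W^J$ if and only if $w(2i-1)<w(2i)$ for all $i=1,\dots,n$ (consistent with (\ref{Eq:tau})); equivalently, $R\in\vp^{-1}(W^J)$ precisely when in each pair of rows $2i-1,2i$ of $R$ the mark in row $2i-1$ lies strictly to the left of the mark in row $2i$.

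I would first dispose of part (2). Given $C\in\DC_n$ with marked cells $(i,j),(i,k)$, $j<k$, in row $i$, the board $\mathbf{b}(C)=R_C$ has marks in $(2i-1,j)$ and $(2i,k)$; merging rows $2i-1$ and $2i$ as in the definition of $\mathbf{m}$ returns exactly the marked columns $\{j,k\}$ in row $i$, so $\mathbf{m}(\mathbf{b}(C))=C$. Since $\mathbf{m}$ forgets which of the two rows carried which mark, this is a pure unwinding of definitions and gives $\mathbf{m}\circ\mathbf{b}=\text{id}$; injectivity of $\mathbf{b}$ is then immediate.

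For the image in part (1), one inclusion is quick: $\mathbf{b}(C)=R_C\in R_{\leq\tau_n}$ by Lemma~\ref{Lem:res}, and by construction the mark of row $2i-1$ of $R_C$ is strictly left of that of row $2i$, so $\vp(R_C)\in W^J$; combined with Theorem~\ref{Thm:bijection} this yields $\vp(R_C)\in W_{\leq\tau_n}^J$, i.e.\ $\text{im}(\mathbf{b})\subseteq\vp^{-1}(W_{\leq\tau_n}^J)$. For the reverse inclusion, let $R\in\vp^{-1}(W_{\leq\tau_n}^J)$; then $R\in R_{\leq\tau_n}$ (Theorem~\ref{Thm:bijection}) and, by the description of $W^J$ above, the mark in row $2i-1$ of $R$ lies in a column $p$ strictly smaller than the column $q$ of the mark in row $2i$. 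Melting gives $C_R=\mathbf{m}(R)$ whose row $i$ carries marks in columns $p<q$, and blowing $C_R$ back up places marks exactly in $(2i-1,p)$ and $(2i,q)$, so $\mathbf{b}(\mathbf{m}(R))=R$ and $R\in\text{im}(\mathbf{b})$. The only subtlety anywhere in the argument is that the ordering convention built into $\mathbf{b}$ (smallest column to the odd-indexed row) is precisely the $W^J$-condition $w(2i-1)<w(2i)$: this is what makes $\mathbf{b}\circ\mathbf{m}$ the identity on $\vp^{-1}(W^J)$---but not on all of $R_{\leq\tau_n}$---and hence what cuts the image of $\mathbf{b}$ down to $\vp^{-1}(W_{\leq\tau_n}^J)$.
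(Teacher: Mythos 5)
Your proof is correct and follows essentially the same route as the paper: both parts (2) and injectivity are immediate from the definitions of the melt and blow maps, and the image computation rests on the characterization $W^J=\{\sigma\mid \sigma(2k-1)<\sigma(2k)\ \text{for all } k\}$ together with Theorem~\ref{Thm:bijection} and Lemma~\ref{Lem:res}. You simply spell out in detail (both inclusions, and why $\mathbf{b}\circ\mathbf{m}$ is the identity exactly on $\vp^{-1}(W^J)$) what the paper's one-line proof leaves implicit.
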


\begin{proof}
By construction, the only thing to be prove is $\text{im}(\mathbf{b})=\vp^{-1}(W_{\leq{\tau_n}}^J)$. It holds by the following description of $W^J$:
$$W^J=\{\s\in W|\ \text{$\s(2k-1)<\s(2k)$ for any $1\leq k\leq n$}\}.$$
\end{proof}

As an application of these maps, we give a bijective proof of Theorem~\ref{Thm:Ancount}:

\begin{proof}[Proof of Theorem~\ref{Thm:Ancount}]
By Lemma~\ref{Lem:bij}, the blow map $\mathbf{b}$ induces a bijection $\DC_n\stackrel{\sim}{\longrightarrow} W_{\leq{\tau_n}}^J$. By counting numbers we proved $h_n=\# W_{\leq {\tau_n}}^J$.
\end{proof}

\begin{remark}
The normalized median Genocchi numbers $h_n$ count a combinatorial structure in $\mathfrak{S}_{2n+2}$ called normalized Dumont permutation. Although \emph{a posteriori} there exists a bijection between the normalized Dumont permutation and $W_{\leq\tau_n}^J$, our approach is different from the one in \cite{K97}, see also \cite{Fei11}.
\end{remark}

\section{Symplectic case}\label{Sec:2}

\subsection{Notations}
Let $\wt{W}=\mathfrak{S}_{4n}$ be the symmetric group, $\wt{J}=\{s_1,s_3,\cdots,s_{4n-1}\}$. Let $\iota$ be the involution of $\wt{W}$ defined by: 
$$
\iota(\s)(k)=4n+1-\s(4n+1-k) \text { for }  \s\in \wt{W} \text{ and } 1\leq k\leq 4n.
$$
The Weyl group $W$ of the symplectic group $\text{Sp}_{4n}$ with generators $\{r_1,r_2,\cdots,r_{2n}\}$ can be embedded into $\wt{W}$ via the map $\kappa: W\ra \wt{W}$, $r_i\mapsto s_is_{4n-i}$ for $1\leq i\leq 2n-1$ and $r_{2n}\mapsto s_{2n}$. The image of $\kappa$ are the $\iota$-fixed elements $\wt{W}^\iota$ in $W$. Let $J=\{r_1,r_3,\cdots,r_{2n-1}\}$. We denote
\footnotesize
$$
\overline{\tau}_{2n}=(r_{2n}\cdots r_{n+1})\cdots (r_{2n}r_{2n-1}r_{2n-2})(r_{2n}r_{2n-1})r_{2n}(r_n\cdots r_{2n-2})\cdots (r_4r_5r_6)(r_3r_4)r_2\in W.
$$
\normalsize
It is observed in \cite{CLL15} that $\kappa(\overline{\tau}_{2n})=\tau_{2n}$.
\par
By Corollary 8.1.9 in \cite{GTM05} (notice the differences between the indices here and those in the reference), the restriction of $\kappa$ to $W_{\leq\overline{\tau}_{2n}}$ gives a bijection 
$$
\alpha:W_{\leq\overline{\tau}_{2n}}\stackrel{\sim}{\longrightarrow}  (\wt{W}_{\leq \tau_{2n}})^\iota.
$$ 
By passing to the right cosets, $\alpha$ induces a bijection $\alpha': W_{\leq\overline{\tau}_{2n}}^J\stackrel{\sim}{\longrightarrow}  (\wt{W}_{\leq \tau_{2n}}^{\wt{J}})^\iota$.

\subsection{Symplectic Dellac configurations}
\begin{defn}\label{defn2} A symplectic Dellac configuration $C$ is a board of $4n$ columns and $2n$ rows with $4n$ marked cells such that
\begin{enumerate}
\item each column contains exactly one marked cell;
\item each row contains exactly two marked cells;
\item if the $(i,j)$-cell is marked, then $i\leq j\leq 2n+i$;
\item for $1\leq i,j\leq 2n$, the $(i,j)$-cell is marked if and only if the $(2n-i+1,4n-j+1)$-cell is marked.
\end{enumerate}
Let $\SpDC_{2n}$ denote the set of such configurations and $e_n$ its cardinality. 
\end{defn}
We have  $e_1 = 1, e_2 = 2,e_3 = 10,e_4 = 98, e_5 = 1594$. Consider the sequence of polynomials defined by recursion: $E_0(x)=1$,
$$E_n(x)=\frac{1}{2}(x+1)((x+2)E_{n-1}(x+2)-xE_{n-1}(x)).$$
\begin{conjecture}
For any $n\geq 0$, $e_{n+1}=E_n(1)$.
\end{conjecture}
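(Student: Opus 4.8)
\begin{remark}[Strategy towards the conjecture]
The plan is to lift the numerical identity $e_{n+1}=E_n(1)$ to an identity of polynomials, in the spirit of the proof of $h_n=H_n(1)$ given in \cite{DR94}. One introduces a statistic $\operatorname{stat}$ on symplectic Dellac configurations and the generating polynomial
\[
\mathcal{E}_n(x):=\sum_{C}x^{\operatorname{stat}(C)},
\]
summed over the configurations contributing to $e_{n+1}$ and normalised so that $\mathcal{E}_0(x)=1$; the goal is to show that $\mathcal{E}_n$ satisfies the recursion defining $E_n$, whence $e_{n+1}=\mathcal{E}_n(1)=E_n(1)$. The statistic should be read off from the rigid part of a configuration: by condition~$(3)$ the first column can only be marked in the first row, and, using condition~$(4)$, the last column only in the last row; hence the first row carries its second mark in a column $c$ bounded above by a controlled amount, and $(4)$ then fixes the second mark of the last row in the mirror column. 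The position of $c$ relative to the marks of the inner rows is the natural source of the weight, and a short computation in low rank --- where $e_2=2$ should be refined to $x+1$ and $e_3=10$ to $(x+1)(2x+3)$ --- pins down the correct normalisation.

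The recursion itself would come from peeling off the $\iota$-symmetric outer shell of a configuration: delete the first and last rows together with the four columns $1$, $c$, the mirror of $c$, and the last column, and then re-index the surviving rows and columns. One checks that the resulting board is again a symplectic Dellac configuration, now one rank smaller, and that the admissibility inequality $i\le j\le 2n+i$ becomes precisely the shifted one. Counting the fibre of this peeling map over a fixed smaller configuration, weighted by $x^{\operatorname{stat}}$, is where the polynomial identity is produced: because a symmetric pair of columns is removed, rather than the single column of the $\mathfrak{sl}_n$ situation which yields the shift $x\mapsto x+1$ inside $H_n$, one expects the shift $x\mapsto x+2$ of $E_n$, while the prefactor $\tfrac12(x+1)$ appears exactly as in the recursion for $H_n$.

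The main obstacle is the minus sign: the term $-\,x\,\mathcal{E}_{n-1}(x)$ must record the degenerate configurations in which $c$ reaches the centre, i.e.\ equals or crosses its own mirror, and these have to be cancelled by an inclusion--exclusion, equivalently by a sign-reversing involution on such configurations. This cancellation is classical in the non-symplectic case; the new point is to make the involution commute with the symmetry $\iota$ while at the same time preserving $\operatorname{stat}$. Once a statistic is found for which both the peeling map and this cancellation are compatible, one gets $\mathcal{E}_n(x)=\tfrac12(x+1)\bigl((x+2)\mathcal{E}_{n-1}(x+2)-x\,\mathcal{E}_{n-1}(x)\bigr)$, and with $\mathcal{E}_0=E_0$ the conjecture follows.

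An alternative, possibly cleaner, route would avoid the ad hoc involution by constructing an explicit bijection between symplectic Dellac configurations and a $\iota$-symmetric variant of a classical model for the normalised median Euler numbers --- Dumont's surjective pistols, or the configurations of Kreweras \cite{K97} --- under which condition~$(4)$ becomes a palindromy condition and the known recursion transports directly. The delicate step there is to prove that a symplectic analogue of the blow map $\mathbf{b}$ carries the fixed-point symmetry on one side precisely to the palindromy on the other.
\end{remark}
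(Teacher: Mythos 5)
This statement is left open in the paper: it is stated as a conjecture (also made independently by Cerulli Irelli and Feigin), with no proof given, only the numerical evidence $e_1=1,\dots,e_5=1594$ and the remark that the values would match the sequence of \cite{RZ96}. So there is no paper proof to compare against, and the question is whether your text itself establishes the identity. It does not: what you have written is a research programme, and every step that would carry the actual content is left unproved. Concretely: (a) the statistic $\operatorname{stat}$ is never defined, and the whole argument hinges on the existence of a statistic for which the generating polynomial $\mathcal{E}_n$ satisfies the recursion of $E_n$ --- your low-rank checks ($x+1$ for $e_2$, $(x+1)(2x+3)$ for $e_3$) only confirm that $E_1$ and $E_2$ are those polynomials, which follows from the recursion itself, not that any statistic on $\SpDC$ realises them; (b) the peeling map is not verified: after deleting the first and last rows and the four columns $1$, $c$, $4n+1-c$, $4n$, the board has the right size $(2n-2)\times(4n-4)$, but whether the re-indexed marks satisfy the shifted inequality $i\le j\le 2(n-1)+i$ depends on the position of $c$ relative to the inner marks, and this is exactly where the subtraction term and the substitution $x\mapsto x+2$ would have to come from --- saying ``one expects'' this shift because a symmetric pair of columns is removed is a heuristic, not an argument; (c) the sign-reversing involution needed to produce $-x\,\mathcal{E}_{n-1}(x)$, and its compatibility with both $\iota$ and the undefined statistic, is only postulated, and you yourself flag it as the main obstacle; (d) the alternative route via an $\iota$-symmetric variant of Kreweras' or Dumont's models is likewise only a suggestion, with the key transport-of-structure step (symmetry $\leftrightarrow$ palindromy under a symplectic blow map) unproved.

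In short, the statement remains a conjecture after your text exactly as it does in the paper. The outline is a reasonable direction --- it mirrors the known proof of $h_n=H_n(1)$ in \cite{DR94} and the $q$-analogue techniques of \cite{Fei12}, and the intended factor $\tfrac12(x+1)$ and shift $x\mapsto x+2$ are at least consistent with the target recursion --- but until the statistic, the fibre count of the peeling map, and the cancellation are actually constructed and checked, no part of the identity $e_{n+1}=E_n(1)$ has been established.
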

\begin{remark}
Giovanni Cerulli Irelli and Evgeny Feigin kindly informed us that they have also a similar conjecture.
\end{remark}
If this conjecture were true, these numbers $e_n$ coincide with the numbers $r_n$ in \cite{RZ96} (see A098279 in OEIS), where their continued fraction developments are studied (Th\'eor\`eme 29 in \emph{loc. cit.}).

\subsection{Main result}
The main result of this section is the following

\begin{theorem}\label{Thm:Cncount}
For any integer $n\geq 1$, $e_n=\# W_{\leq\overline{\tau}_{2n}}^J$.
\end{theorem}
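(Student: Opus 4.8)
The plan is to mimic the strategy of Section~\ref{Sec:1} as closely as possible, transporting the combinatorial bijection through the symplectic fold $\kappa$. By Theorem~\ref{Thm:Ancount} applied at level $2n$, we already know $h_{2n}=\#\wt W_{\leq\tau_{2n}}^{\wt J}$, and the blow/melt maps give an explicit bijection $\mathbf{b}\colon\DC_{2n}\stackrel{\sim}{\longrightarrow}\vp^{-1}(\wt W_{\leq\tau_{2n}}^{\wt J})$. Moreover the bijection $\alpha'$ identifies $W_{\leq\overline\tau_{2n}}^J$ with the $\iota$-fixed subset $(\wt W_{\leq\tau_{2n}}^{\wt J})^\iota$. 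So it suffices to show that under the composite bijection $\DC_{2n}\to\wt W_{\leq\tau_{2n}}^{\wt J}$, the subset $\SpDC_{2n}$ corresponds exactly to the $\iota$-fixed points; then $e_n=\#\SpDC_{2n}=\#(\wt W_{\leq\tau_{2n}}^{\wt J})^\iota=\#W_{\leq\overline\tau_{2n}}^J$ by pure counting.

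The key step, then, is to translate the involution $\iota$ on permutations into an involution on Dellac configurations and check it matches condition (4) of Definition~\ref{defn2}. First I would unwind the definitions: a configuration $C\in\DC_{2n}$ blows up to a rook arrangement $R_C\in\mathcal R_{4n}$, hence to a permutation $\s\in\wt W_{\leq\tau_{2n}}$ with $\s(2k-1)<\s(2k)$; conversely the melt map reads $C$ off $\s$ by recording, for each $k$, the pair $\{\s(2k-1),\s(2k)\}$ as the two marks in row $k$. Now $\iota(\s)(t)=4n+1-\s(4n+1-t)$. One checks that $\iota$ sends the block $\{2k-1,2k\}$ of domain indices to the block $\{4n+1-2k,\ 4n-2k+2\}=\{2(2n-k+1)-1,\ 2(2n-k+1)\}$, i.e.\ it permutes the rows of $C$ by $k\mapsto 2n-k+1$, and on the mark-values it acts by $j\mapsto 4n+1-j$, reversing the columns; one should also verify the order-reversal caused by $j\mapsto 4n+1-j$ is compensated by the row-swap within the pair, so that the ``first mark $<$ second mark'' convention is preserved. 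Hence $\s\in(\wt W_{\leq\tau_{2n}})^\iota$ if and only if $R_C$, equivalently $C$, is invariant under ``reflect rows through the center and reverse columns'', which is precisely condition~(4). Invariance is automatically compatible with conditions (1)--(3), since $\iota$ preserves $\tau_{2n}$ and its right hull (this is the content of $\kappa(\overline\tau_{2n})=\tau_{2n}$ and Corollary~8.1.9 of \cite{GTM05}), and symmetrically $i\le j\le 2n+i$ maps to $2n-i+1\le 4n-j+1\le 4n-i+1$.

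There is one compatibility subtlety worth isolating as the main obstacle: I must make sure the passage to right cosets commutes with $\iota$ in the right way, i.e.\ that the minimal coset representative of an $\iota$-fixed coset is itself $\iota$-fixed, and that $\iota$ on $\wt W$ restricts correctly to $\wt W_{\wt J}$ (equivalently that $\iota(\wt J)=\wt J$, which holds since $\iota(s_i)=s_{4n-i}$ and $\wt J=\{s_1,s_3,\ldots,s_{4n-1}\}$ is stable under $i\mapsto 4n-i$). Granting this — it is essentially the statement that $\alpha'$ above is well-defined, already asserted in the excerpt — the argument closes. So the proof is: (i) recall $\mathbf b$ and the coset description of $\wt W^{\wt J}$ from Section~\ref{Sec:1}; (ii) show $\mathbf b$ intertwines the column-reversing row-reflection on $\DC_{2n}$ with $\iota$ on $\vp^{-1}(\wt W_{\leq\tau_{2n}}^{\wt J})$; (iii) deduce $\mathbf b$ restricts to a bijection $\SpDC_{2n}\stackrel{\sim}{\longrightarrow}(\wt W_{\leq\tau_{2n}}^{\wt J})^\iota$; (iv) compose with $(\alpha')^{-1}$ and count. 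Note this also yields Theorem~B once we record that $X_{\overline\tau_{2n}}^{sp}$ has torus fixed points indexed by $W_{\leq\overline\tau_{2n}}^J$.
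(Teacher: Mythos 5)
Your proposal is correct and follows essentially the same route as the paper: restrict the type $A$ blow/melt bijection of Lemma~\ref{Lem:bij} (at level $2n$) to the $\iota$-symmetric locus, check that $\iota$-invariance of the permutation corresponds exactly to condition~(4) of Definition~\ref{defn2}, and transport through $\alpha'$ coming from Corollary~8.1.9 of \cite{GTM05}. The only difference is presentational: the paper packages the $\iota$-fixed rook arrangements as an explicit intermediate object (``symplectic rook arrangements'') with a small lemma on $\vp'$ and $\psi$, whereas you work with the involution directly on permutations and configurations.
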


\begin{proof}
We prove the theorem by establishing a bijection between $W_{\leq\overline{\tau}_{2n}}^J$ and $\SpDC_{2n}$, following the strategy in the proof of Theorem~\ref{Thm:Ancount}. 
\par
A symplectic rook arrangement $C$ is a board of $4n$ columns and rows with $4n$ marked points satisfying:
\begin{enumerate}
\item $C$ is a rook arrangement;
\item for any $1\leq i\leq 4n$ and $1\leq j\leq 2n$, the cell $(i,j)$ is marked if and only if the cell $(4n+1-i, 4n+1-j)$ is marked.
\end{enumerate}
The set of symplectic rook arrangements is denoted by $\mathcal{SR}_{4n}$. Similarly to Section~\ref{Sec:Rook} we can define the restricted symplectic rook arrangements with respect to $\tau_{2n}$: $\mathcal{SR}_{\leq \tau_{2n}}:=\mathcal{SR}_{4n}\cap \mathcal{R}_{\leq\tau_{2n}}$. 
\par
Consider the bijection $\vp: \mathcal{R}_{4n}\stackrel{\sim}{\longrightarrow}\mathfrak{S}_{4n}$ from (\ref{phi}).
\begin{lemma}
\begin{enumerate}
\item The restriction of the map $\vp$ induces a bijection $\vp':\mathcal{SR}_{4n}\stackrel{\sim}{\longrightarrow} \wt{W}^\iota=W$. 
\item The restriction of the map $\vp'$ induces a bijection $\psi:\mathcal{SR}_{\leq\tau_{2n}}\stackrel{\sim}{\longrightarrow} (\wt{W}_{\leq\tau_{2n}})^\iota$.
\end{enumerate}
\end{lemma}

\begin{proof}
\begin{enumerate}
\item Take a board $R$ in $\mathcal{SR}_{4n}$, the condition (2) in its definition implies that $\vp(R)$ is invariant under the involution $\iota$. It suffices to show that $\vp'$ is surjective: let $\s\in\wt{W}$, by definition of $\iota$, $\s$ is fixed by the involution $\iota$ if and only if $\s(4n+1-k)=4n+1-\s(k)$ for any $1\leq k\leq 4n$, i.e., for any $1\leq i\leq 4n$ and $1\leq j\leq 2n$, $\s(i)=j$ if and only if $\s(4n+1-i)=4n+1-j$. It implies that $\vp^{-1}(\s)$ is in $\mathcal{SR}_{4n}$.
\item Since $\mathcal{SR}_{\leq \tau_{2n}}=\mathcal{SR}_{4n}\cap \mathcal{R}_{\leq\tau_{2n}}$ and $(\wt{W}_{\leq\tau_{2n}})^\iota=\wt{W}^\iota\cap \wt{W}_{\leq \tau_{2n}}$, the bijectivity of $\psi$ follows from (1) and Theorem~\ref{Thm:bijection}.
\end{enumerate}
\end{proof}
\par
Moreover, consider the restriction of the melt map $\mathbf{m}:\mathcal{R}_{\leq \tau_{2n}}\ra\DC_{2n}$ on $\mathcal{SR}_{\leq \tau_{2n}}$. Since the condition (2) in the definition of the symplectic rook arrangement translates to the condition (4) in the definition of the symplectic Dellac configuration under the melt map, $\mathbf{m}$ induces a map $\mathbf{m}':\mathcal{SR}_{\leq\tau_{2n}}\ra\SpDC_{2n}$.
\par
\begin{example}
Let us consider an example where $n=2$ and the permutation is giving by the following rook arrangement:
$$\qquad \qquad \qquad  \qquad \qquad \qquad 
\begin{tabularx}{\textwidth}{|p{0.2cm}|p{0.2cm}|p{0.2cm}|p{0.2cm}|p{0.2cm}|p{0.2cm}|p{0.2cm}|p{0.2cm}|}
\cline{1-8}
  $\bullet$  \cellcolor[gray]{.8}
& & & & &  & & \\
\cline{1-8}
 & \cellcolor[gray]{.8}&\cellcolor[gray]{.8}
& $\bullet$\cellcolor[gray]{.8}& \cellcolor[gray]{.8}&  & & \\
\cline{1-8}
 & \cellcolor[gray]{.8}$\bullet$&\cellcolor[gray]{.8}  & \cellcolor[gray]{.8}& \cellcolor[gray]{.8}&  & & \\
\cline{1-8}
 & & \cellcolor[gray]{.8}$\bullet$&\cellcolor[gray]{.8}& \cellcolor[gray]{.8}&\cellcolor[gray]{.8} &  &  \\
\cline{1-8}
 & & \cellcolor[gray]{.8}&\cellcolor[gray]{.8} & \cellcolor[gray]{.8}&\cellcolor[gray]{.8} $\bullet$&  & \\
 \cline{1-8}
 & & &\cellcolor[gray]{.8} &\cellcolor[gray]{.8}&\cellcolor[gray]{.8}& \cellcolor[gray]{.8}$\bullet$& \\
 \cline{1-8}
 & & &\cellcolor[gray]{.8} &\cellcolor[gray]{.8}$\bullet$ &\cellcolor[gray]{.8}  & \cellcolor[gray]{.8}& \\
 \cline{1-8}
  & & & & &  & & \cellcolor[gray]{.8}$\bullet$\\
  \cline{1-8}
\end{tabularx}
$$
where the shadowed area is the convex hull of the marked cells in $R_{\overline{\tau}_4}$. It is straightforward to see that the rook arrangement is fixed by $\iota$ and hence symplectic. The corresponding symplectic Dellac configuration via the melt map $\mathbf{m}$ is given by:
$$\qquad \qquad \qquad  \qquad \qquad \qquad 
\begin{tabularx}{\textwidth}{|p{0.2cm}|p{0.2cm}|p{0.2cm}|p{0.2cm}|p{0.2cm}|p{0.2cm}|p{0.2cm}|p{0.2cm}|}
\cline{1-8}
  $\bullet$    &  & &$\bullet$ & &   && \\
\cline{1-8}
 &$\bullet$ & $\bullet$&  & &  & &\\
\cline{1-8}
 & & &  & & $\bullet$ &$\bullet$ &\\
\cline{1-8}
 & & &  &$\bullet$ &  & &$\bullet$\\
\cline{1-8}
\end{tabularx}
$$
\end{example}

\bigskip

\noindent\textit{continue the proof of Theorem~\ref{Thm:Cncount}:}\\
The restriction of the blow map $\mathbf{b}:\DC_{2n}\ra\mathcal{R}_{\leq\tau_{2n}}$ to $\SpDC_{2n}$ gives a map $\mathbf{b}':\SpDC_{2n}\ra\mathcal{SR}_{\leq \tau_{2n}}$. By Lemma~\ref{Lem:bij}, $\mathbf{b}$ is injective with $\text{im}(\mathbf{b})=\vp^{-1}(\wt{W}_{\leq\tau_{2n}}^{\wt{J}})$. It implies that $\mathbf{b}'$ is injective with 
$$\text{im}(\mathbf{b}')=\vp^{-1}(\wt{W}_{\leq\tau_{2n}}^{\wt{J}}\cap \wt{W}^\iota)=\psi^{-1}((\wt{W}_{\leq \tau_{2n}}^{\wt{J}})^\iota)$$
and $\mathbf{m}'\circ \mathbf{b}'=\text{id}$.

By the above argument, the blow map $\mathbf{b}'$ gives a bijection $\SpDC_{2n}\stackrel{\sim}{\longrightarrow} (\wt{W}_{\leq \tau_{2n}}^{\wt{J}})^\iota$, composing with $(\vp')^{-1}$ we get a bijection $\SpDC_{2n}\stackrel{\sim}{\longrightarrow} W_{\leq\overline{\tau}_{2n}}^J$.
\end{proof}

\section{Application to torus fixed points}\label{Sec:3}

We show how the construction in Section~\ref{Sec:1} is related to the study of the torus fixed points in the degenerate flag variety.
\subsection{Schubert varieties}
Let $\s_n\in\mathfrak{S}_{2n}$ be the permutation defined as follows:
\begin{equation}\label{Eq:sigma}
{\s_n}(r)=\left\{\begin{matrix} k,& r=2k;\\
n+1+r, & r=2k+1.\end{matrix}\right.
\end{equation}
We see that $\sigma_n$ can be obtained by restricting $\tau_{n+1} \in S_{2n+2}$ to the set $\{2, \ldots, 2n+1\}$. \\
We denote $X_{\sigma_n}$ the Schubert variety corresponding to $\sigma_n$ in the projective variety $SL_n/P$ where $P$ is the standard parabolic subalgebra defined as the stabilizer of the highest weight line of weight $\varpi_1+\varpi_3+\cdots+\varpi_{2n-1}$.  The maximal torus $T_{2n-1}$ of $\text{SL}_{2n}$ acts naturally on $X_{\s_n}$: let $X_{\s_n}^{T_{2n-1}}$ be the set of torus fixed points.\\

It is a standard result that the torus fixed points $X_{\s_n}^{T_{2n-1}}$ can be identified with the quotient $W_{\leq\s_n}^J$ where $W=\mathfrak{S}_{2n}$ and $J=\{2,4,\cdots,2n-2\}$: for $\tau\in W_{\leq\s_n}^J$, the corresponding torus fixed point in $X_{\s_n}^{T_{2n-1}}$ is:
$$\langle e_{\tau(1)}\rangle_{\bc} \subset\langle e_{\tau(1)},e_{\tau(2)},e_{\tau(3)}\rangle_{\bc} \subset\cdots\subset \langle e_{\tau(1)},e_{\tau(2)},\cdots,e_{\tau(2n-1)}\rangle_{\bc} \in X_{\s_n}$$
where $e_1,e_2,\cdots,e_{2n}$ is a fixed basis of $\mc^{2n}$.
\par

\subsection{Degenerate flag varieties}
We fix a basis $\{f_1,f_2,\cdots,f_{n+1}\}$ of $\mc^{n+1}$. Let $\ff l_{n+1}^a$ be the degenerate flag variety of $\text{SL}_{n+1}$ (see \cite{Fei11} for details):
$$\ff l_{n+1}^a=\{(V_1,V_2,\cdots,V_n)\in\prod_{i=1}^n \text{Gr}_{i}(\mc^{n+1})|\ \text{pr}_{i+1}(V_i)\subset V_{i+1}\text{ for any }i=1,\cdots,n\},$$
where $\text{pr}_i:\mc^{n+1}\ra\mc^{n+1}$ is the linear projection along the line generated by $f_i$. By \cite{CFR12}, the torus $T_{2n-1}$ acts on $\ff l_{n+1}^a$: let $(\ff l_{n+1}^a)^{T_{2n-1}}$ be the corresponding set of torus fixed points.

\bigskip

In \cite{CL15}, it is shown that there exists a $T_{2n-1}$-equivariant isomorphism of projective varieties $\zeta: \ff l_{n+1}^a\stackrel{\sim}{\longrightarrow}X_{\s_n}\subset \text{SL}_{2n}/P$. We are especially interested in the image of torus fixed points under $\zeta$:\\
 Fix a basis $\{e_1,e_2,\cdots,e_{2n}\}$ of $\mc^{2n}$. For any $i=1,2,\cdots,n$, we denote the coordinate subspace $U_{n+i}=\langle e_1,e_2,\cdots,e_{n+i}\rangle\subset W$. The surjection $\pi_i:U_{n+i}\ra\mc^{n+1}$ is defined by:
\begin{equation}
{\pi_i}(e_k)=\left\{\begin{matrix} 0& \text{\ if\ }1\leq k\leq i-1;\\
f_k &  \text{\ if\ }i\leq k\leq n+1;\\
f_{k-n-1} &  \text{\ if\ }n+2\leq k\leq n+i.\end{matrix}\right.
\end{equation}
Define $\zeta_i:\text{Gr}_i(\mc^{n+1})\ra \text{Gr}_{2i-1}(\mc^{2n})$ to be the concatenation of the following maps:
$$\text{Gr}_i(\mc^{n+1})\ra \text{Gr}_{2i-1}(U_{n+i})\ra \text{Gr}_{2i-1}(\mc^{2n}),\ \ U\mapsto \pi_i^{-1}(U)\mapsto \pi_i^{-1}(U).$$
Then $\zeta:\ff l_{n+1}^a\ra X_{\s_n}$ is given by $\prod_{i=1}^n \zeta_i$ (see Section 2 of \cite{CL15} for details).
\par
It is clear that the torus $T_{n}$ of $\text{SL}_{n+1}$ acts naturally on $\ff l_{n+1}^a$. By results in Section 7.2 of \cite{CFR12}, any $T_{2n-1}$ fixed point  in $\ff l_{n+1}^a$ is in fact a  $T_{n}$-fixed point. In  \cite{Fei11}, an explicit bijection $\mathbf{f}$ between the $T_{2n-1}$-fixed points and Dellac configuration is provided.

\subsection{A commutative diagram}

As a summary, starting with a $T_{n}$-fixed point in $\ff l_{n+1}^a$, there are two ways to obtain a Dellac configuration:
\begin{enumerate}
\item via the bijection $\mathbf{f}$ given by \cite{Fei11};
\item consider this fixed point as a fixed point in the Schubert variety $X_{\s_n}$, hence identify it with an element in $W_{\leq\s_n}^J$, then melt the corresponding rook arrangement to get a Dellac configuration.
\end{enumerate}
It is natural to ask  whether the following diagram commutes:
\[
\xymatrix{
(\ff l_{n+1}^a)^{T_{2n-1}}=(\ff l_{n+1}^a)^{T_{n}} \ar[r]^-{\mathbf{f}} \ar[d]^-{\beta} & {\DC}_{n+1} \ar[d]^-{\mathbf{b}}\\ X_{\tau_{n+1}}^{T_{2n+1}}=X_{\s_n}^{T_{2n-1}}& W_{\leq\tau_{n+1}}^J \ar[l]_-{\alpha}}.
\]
where the map $\alpha$ is given as follows:\\ for $\s\in W_{\leq\tau_{n+1}}^J$ where $W=\mathfrak{S}_{2n+2}$, we define the map $\alpha$ as follows: $\alpha(\s)$ is the sequence of subspaces $W_1\subset W_2\subset\cdots\subset W_n$ such that $W_i$ is the subspace of $\mc^{2n}$ generated by $e_{\overline{\s}(1)}, e_{\overline{\s}(2)},\cdots,e_{\overline{\s}(2i-1)}$, where $\overline{\sigma}$ is the (well-defined) restriction of $\sigma$ to $\mathfrak{S}_{2n}$. We can identify this element in $X_{\s_n}^{T_{2n-1}}$ with $n$ subsets $J_1,\cdots,J_n$ of $\{1,2,\cdots,2n\}$ such that $J_i=\{\overline{\s}(1),\overline{\s}(2),\cdots,\overline{\s}(2i-1)\}$.
\par
It remains to consider restriction of the map $\zeta$ to fixed points. Here we have to include an extra twist, since the definition of the degenerate flag variety is slightly different in \cite{Fei11} and  \cite{CL15}: let $(V_1,V_2,\cdots,V_n)\in (\ff l_{n+1}^a)^{T_{n}}$, it can be identified (\cite{Fei11}, Corollary 2.11) with $n$ subsets $I_1,I_2,\cdots,I_n$ of $\{1,2,\cdots,n+1\}$ such that $\# I_k=k$ and for any $k=1,2,\cdots,n$, $I_k\backslash\{k+1\}\subset I_{k+1}$.
\par
We denote $\kappa=(12\cdots n+1)^{-1}$ be the inverse of the longest cycle in $\mathfrak{S}_{n+1}$. Suppose that $I_l=\{i_{l,1},i_{l,2},\cdots,i_{l,l}\}$, we denote $I_l^\kappa=\{\kappa(i_{l,1}),\kappa(i_{l,2}),\cdots,\kappa(i_{l,l})\}$. We define a map $p_l:\{1,2,\cdots,n+l\}\ra\{1,2,\cdots,n+1\}$ by
\begin{equation}
{p_l}(s)=\left\{\begin{matrix} 0& \text{\ if\ }1\leq s\leq l-1;\\
s &  \text{\ if\ }l\leq k\leq n+1;\\
s-n-1 &  \text{\ if\ }n+2\leq k\leq n+l.\end{matrix}\right.
\end{equation}
Then $\beta((I_1,I_2,\cdots,I_n))=(T_1,T_2,\cdots,T_n)$ where $T_l=p_l^{-1}(I_l^\kappa)$.

\begin{theorem}
The diagram above commutes, i.e., $\zeta=\alpha\circ \mathbf{b}\circ \mathbf{f}$.
\end{theorem}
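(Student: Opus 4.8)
Both composites in the square are maps from the finite set of torus fixed points of $\ff l_{n+1}^a$ to that of $X_{\s_n}$, and since every arrow is an explicit bijection between explicitly described combinatorial sets, the assertion is ultimately an identity of finite sets; the plan is to unwind each of the two composites, $\beta$ and $\alpha\circ\mathbf b\circ\mathbf f$, into a closed formula for the flag it produces and then to compare them. Throughout I write a $T_n$-fixed point of $\ff l_{n+1}^a$ as a tuple $(I_1,\dots,I_n)$ with $I_k\subseteq\{1,\dots,n+1\}$, $\#I_k=k$ and $I_k\setminus\{k+1\}\subseteq I_{k+1}$ (the identification of \cite{Fei11}, Corollary~2.11), and a $T_{2n-1}$-fixed point of $X_{\s_n}$ as a chain of coordinate supports $J_1\subsetneq\dots\subsetneq J_n\subseteq\{1,\dots,2n\}$ with $\#J_l=2l-1$. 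In the square the arrow labelled $\mathbf b$ abbreviates the blow map followed by $\vp$, so that, by Lemma~\ref{Lem:bij}, its value at $C\in\DC_{n+1}$ is the permutation $\s\in W^J_{\le\tau_{n+1}}\subseteq\mathfrak S_{2n+2}$ for which $(\s(2i-1),\s(2i))$ is the ordered pair of columns carrying the two marks of row $i$ of $C$; and the arrow labelled $\zeta$ is the map $\beta$ it induces on fixed points.

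For the $\beta$-side, unwinding the definition of $\beta$ (that is, of $\zeta_l\colon U\mapsto\pi_l^{-1}(U)$, transcribed through $p_l$ and $\kappa$), and recalling that a linear preimage always contains $\ker\pi_l$, whose coordinate support is the fibre $p_l^{-1}(0)=\{1,\dots,l-1\}$, one obtains
\[
T_l=\{1,\dots,l-1\}\ \cup\ \{\,k\in I_l^\kappa:\ k\ge l\,\}\ \cup\ \{\,n+1+k:\ k\in I_l^\kappa,\ k\le l-1\,\},
\]
with $I_l^\kappa=\kappa(I_l)$, $\kappa(1)=n+1$ and $\kappa(j)=j-1$ for $2\le j\le n+1$; in particular $\#T_l=(l-1)+\#I_l=2l-1$, as it must be. For the $\alpha\circ\mathbf b\circ\mathbf f$-side put $C=\mathbf f((I_\bullet))\in\DC_{n+1}$ and $\s=\vp(\mathbf b(C))$. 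One first checks that $\s$ fixes $1$ and $2n+2$: by condition~(3) of Definition~\ref{defn1} the unique mark in column $1$ of $C$ is forced into row $1$ and the one in column $2n+2$ into row $n+1$, so, since $\s(2i-1)<\s(2i)$, indeed $\s(1)=1$ and $\s(2n+2)=2n+2$. Hence the restriction $\overline{\s}\in\mathfrak S_{2n}$ entering the definition of $\alpha$ is just $\overline{\s}(r)=\s(r+1)-1$, and therefore
\[
J_l=\{\overline{\s}(1),\dots,\overline{\s}(2l-1)\}=\bigl\{\,m-1\ :\ m\in M_l,\ m\neq 1\,\bigr\},\qquad M_l:=\bigcup_{i=1}^{l}\{\s(2i-1),\s(2i)\},
\]
where $M_l$ is exactly the set of columns carrying marks in the first $l$ rows of $C$ and $1\in M_l$. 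Thus the theorem comes down to the equality $M_l=\{1\}\cup(T_l+1)$ for all $l$ — a statement purely about Feigin's bijection $\mathbf f$.

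For this last step I would import the explicit description of $\mathbf f$ from \cite{Fei11} (read after the $90^\circ$ rotation recorded right after Definition~\ref{defn1}) and argue by induction on $l$: the base case $l=1$ is a single-point check, and the step from $l-1$ to $l$ matches the two columns $\s(2l-1),\s(2l)$ entering $M_l$ against the two elements entering $T_l$, the latter being governed entirely by the single step $I_{l-1}\to I_l$ of Feigin's chain — which element of $\{1,\dots,n+1\}$ is inserted, and possibly which is deleted. In each of the three regimes of $p_l$ (killed, fixed, shifted by $n+1$), matched against the two regimes of the shift $r\mapsto r+1$ and against the cyclic twist $\kappa$, this reduces to a short case check, which is precisely the compatibility $\mathbf f$ was designed to furnish. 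The routine part is that case analysis; the genuine obstacle is keeping the several normalizations coherent simultaneously — the $90^\circ$ rotation relating the Dellac configurations of \cite{Fei11} to those of Definition~\ref{defn1}, the restriction $\mathfrak S_{2n+2}\to\mathfrak S_{2n}$ (valid because $\s$ fixes $1$ and $2n+2$) built into $\alpha$, and the compatibility of the cyclic twist $\kappa$ with the relabelling $p_l$ (rather than $\pi_l$) built into $\beta$. I expect the cleanest way to manage this is to transport everything onto the two augmented chains $\emptyset=I_0,I_1,\dots,I_n,I_{n+1}=\{1,\dots,n+1\}$ (with $\#I_k=k$ and $I_k\setminus\{k+1\}\subseteq I_{k+1}$) and $\emptyset=J_0\subsetneq J_1\subsetneq\dots\subsetneq J_n$, and to show that $\mathbf f$ identifies the $l$-th step of the first chain with the pair $(\s(2l-1),\s(2l))$, after which $M_l=\{1\}\cup(T_l+1)$, hence $J_l=T_l$, follows at once.
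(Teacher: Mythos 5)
Your setup is correct and follows exactly the strategy of the paper's own (sketched) proof: compare the two sides row by row, i.e.\ show by induction on $l$ that the two elements entering $J_l$ coincide with the two elements entering $T_l$, the latter being determined by the single step $I_{l-1}\to I_l$. Everything you actually write down checks out, and in two places you are tidier than the paper: your formula for $T_l$ makes the kernel $\{1,\dots,l-1\}$ explicit (read literally, the paper's $T_l=p_l^{-1}(I_l^\kappa)$ has only $l$ elements, while its own case computations use the $2l-1$-element set you describe), and your observation that condition (3) of Definition~\ref{defn1} forces $\s(1)=1$ and $\s(2n+2)=2n+2$, so that $\overline{\s}(r)=\s(r+1)-1$, justifies the restriction built into $\alpha$. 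The reduction of the theorem to $M_l=\{1\}\cup(T_l+1)$, equivalently to $\{\s(2l-1),\s(2l)\}=(T_l\setminus T_{l-1})+1$, is also correct.

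The gap is that you stop exactly where the mathematical content begins. The identity $\{\s(2l-1),\s(2l)\}=(T_l\setminus T_{l-1})+1$ is a genuine statement about Feigin's bijection $\mathbf{f}$, and your proposal never records what $\mathbf{f}$ does on a single step $I_{l-1}\to I_l$; calling the needed identity ``the compatibility $\mathbf{f}$ was designed to furnish'' begs the question, since $\mathbf{f}$ was constructed only to biject fixed points with Dellac configurations and this compatibility \emph{is} the theorem. The paper's proof consists precisely of that case check, using the explicit description of $\mathbf{f}$ in \cite[Proposition 3.1]{Fei11}: if $l\notin I_{l-1}$ and $I_l\setminus I_{l-1}=\{j\}$ with $j>l$, row $l$ of $\mathbf{f}(\mathbf{I})$ is marked in columns $l$ and $j$ while $T_l\setminus T_{l-1}=\{l-1,j-1\}$; if $l\in I_{l-1}\cap I_l$ with $I_l\setminus I_{l-1}=\{j\}$, $j<l$, the marks sit in columns $j+n+1$ and $l+n+1$ while $T_l\setminus T_{l-1}=\{j+n,l+n\}$ (note that here $l-1$ changes status from a $p_{l-1}$-preimage to a kernel element, one of the bookkeeping points you flag); if $l\in I_{l-1}\setminus I_l$ with $I_l\setminus I_{l-1}=\{j_1,j_2\}$, $j_1<l<j_2$, the marks sit in columns $j_1+n+1$ and $j_2$ while $T_l\setminus T_{l-1}=\{j_1+n,j_2-1\}$; and similarly for the remaining subcases. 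Until you import that description and run these cases, what you have is a correct reduction plus a plan, not a proof; with them, your argument becomes essentially identical to the paper's.
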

The proof is given by a case-by-case examination, we will only give a sketch.

\begin{proof}
We pick $\mathbf{I}=(I_1,I_2,\cdots,I_n)\in(\ff l_{n+1}^a)^{T_{n+1}}$. Recall that the map $\mathbf{f}$ is given in \cite[Proposition 3.1]{Fei11}.
\begin{enumerate}
\item Suppose that $l\notin I_{l-1}$, then $I_l\backslash I_{l-1}=\{j\}$. We consider the case $j>l$: in the Dellac configuration $f(\mathbf{I})$, the cells $(l,l)$ and $(l,j)$ are marked. Then by definition, $\s=\mathbf{b}(f(\mathbf{I}))$ satisfies $\s(2l-1)=l$ and $\s(2l)=j$. Hence in $\alpha(\s)$, $J_l\backslash J_{l-1}=\{l-1,j-1\}$.
\par
We compute $\beta(\mathbf{I})$: it is clear that $I_l^\kappa\backslash I_{l-1}^\kappa=\{j-1\}$, then $p_l^{-1}(I_l^\kappa)\backslash p_{l-1}^{-1}(I_{l-1}^\kappa)=p_l^{-1}(\{l-1,j-1\})=\{l-1,j-1\}$. Therefore $T_l\backslash T_{l-1}=\{l-1,j-1\}$, i.e., $J_l=T_l$.
\par
It is similar to deal with the case $j<l$.
\item Suppose that $l\in I_{l-1}$ and $l\in I_l$, then $I_l\backslash I_{l-1}=\{j\}$. We study the case $j<l$: in the corresponding Dellac configuration, the cells $(l,l+n+1)$ and $(l,j+n+1)$ are marked. The associated permutation $\s=\mathbf{b}(f(\mathbf{I}))$ satisfies $\s(2l-1)=j+n+1$ and $\s(2l)=l+n+1$. Hence in $\alpha(\s)$, $J_l\backslash J_{l-1}=\{j+n,l+n\}$.
\par
For $\beta(\mathbf{I})$: $l\in I_{l-1}\cap I_l$ and $I_l\backslash I_{l-1}=\{j\}$ imply that $l-1\in I_{l-1}^\kappa\cap I_l^\kappa$ and $I_l^\kappa\backslash I_{l-1}^\kappa=\{\kappa(j)\}$. Notice that no matter $j=1$ or $j>1$, $p_l^{-1}(\kappa(j))=j+n$. By the assumption $j<l$, 
$$p_l^{-1}(I_l^\kappa)\backslash p_{l-1}^{-1}(I_{l-1}^\kappa)=p_l^{-1}(\{l-1,\kappa(j)\})=\{j+n,l+n\},$$
which proved $J_l=T_l$.
\par
The case where $j>l$ can be similarly proved.
\item Suppose that $l\in I_{l-1}$ and $l\notin I_l$, then there exists $j_1$ and $j_2$ such that $I_l\backslash I_{l-1}=\{j_1,j_2\}$. We assume that $j_1<l$ and $j_2>l$, in the corresponding Dellac configuration, the cells $(l,j_1+n+1)$ and $(l,j_2)$ are marked, hence in $\alpha(\mathbf{b}(f(\mathbf{I})))$, $J_l\backslash J_{l-1}=\{j_1+n,j_2-1\}$.
\par
For $\beta(\mathbf{I})$, we have
$$p_l^{-1}(I_l^\kappa)\backslash p_{l-1}^{-1}(I_{l-1}^\kappa)=p_l^{-1}(\{\kappa(j_1),j_2-1\})=\{j_1+n,j_2-1\},$$
therefore $J_l=T_l$.
\par
All other cases can be proved in the same way.
\end{enumerate}
\end{proof}

\begin{remark}
A similar diagram without the map $\mathbf{f}$ exists in the symplectic case by changing
\begin{enumerate}
\item the degenerate flag variety to the symplectic degenerate flag variety (see \cite{FFiL12});
\item the Schubert variety of $\text{SL}_{2n}$ by the Schubert variety in the symplectic group (see \cite{CL15});
\item the Dellac configuration by the symplectic Dellac configuration;
\item the set $W_{\leq\tau_{n+1}}^J$ by $W_{\leq \overline{\tau}_{2n+2}}^J$.
\end{enumerate}
\end{remark}

\begin{remark}\label{Rmk:proof}
The original proof of Theorem~\ref{Thm:Ancount} is given by showing the composition $\alpha^{-1}\circ \beta\circ \mathbf{f}^{-1}$ is a bijection: $\mathbf{f}$ is a bijection is shown in \cite{Fei11}; by the main theorem of \cite{CL15}, $\beta$ is a bijection; $\alpha$ is a well-known bijection. Our proof of the theorem uses an intuitive map $\mathbf{b}$ to avoid the geometrical proof.
\end{remark}

\end{document}